\numberwithin{equation}{section}
\newcounter{mnotecount}[section]
\theoremstyle{plain}
\newtheorem{theorem}{Theorem}
\newtheorem{lemma}{Lemma}
\newtheorem{proposition}{Proposition}
\theoremstyle{definition}
\newtheorem{remark}{Remark}
\title{Local Wellposedness and Global Weak Solutions of the Pauli-Darwin/Poisswell Equations}
\author[a]{Pierre Germain}
\author[b]{Norbert J. Mauser}
\author[b,c]{Jakob Möller}
\affil[a]{Imperial College London, Dept of Mathematics, South Kensington Campus, London SW7 2AZ, UK}
\affil[b]{Research Platform MMM "Mathematics-Magnetism-Materials" c/o Fak. Mathematik, Univ. Wien, Oskar Morgenstern Platz 1, A-1090 Vienna}
\affil[c]{CMLS, École Polytechnique, F-91128 Palaiseau}
\begin{document}
\maketitle
%
\begin{abstract}
We construct local (in time) strong solutions in {$H^s(\mathbb{R}^3)$, $s>3/2$} and global weak solutions with finite energy for both the Pauli-Darwin and the Pauli-Poisswell systems. These are the first rigorous results on local and global wellposedness for these nonlinear first-order semi-relativistic quantum models for fast moving electrons. The Pauli equation is essentially a vector-valued magnetic Schrödinger equation for a 2-spinor with an additional Stern-Gerlach term coupling spin and magnetic field, keeping terms up to first order in $1/c$, where $c$ denotes the speed of light. The self-consistent electromagnetic field is computed from the charge density and current density by semi-relativistic approximations of the Maxwell equations: the Poisswell equation at $O(1/c)$ and the Darwin equation at $O(1/c^2)$.\\ We present the physics and asymptotic relations and provide proofs that rely on energy estimates for the strong solutions and compactness with an appropriate regularization for the weak solutions.
\end{abstract}
%

\setcounter{tocdepth}{1}
\tableofcontents

\section{Introduction}
\subsection{The equations}

 We consider two nonlinear systems of PDE consisting of the Pauli equation, introduced by Wolfgang Pauli in \cite{pauli1927quantenmechanik}, for the unknown 2-spinor $u$ with values in $\mathbb{C}^2$, coupled to semi-relativistic approximations of the Maxwell equations for the unknown scalar electric potential $V$ (taking values in $ \mathbb{R}$) and the vector-valued magnetic potential $A$ (taking values in $ \mathbb{R}^3$), where the source terms are the scalar charge density $|u|^2 = |u_1|^2+|u_2|^2$ and the vector-valued current density $J$.
 
 We consider the setting in 3 space dimensions, i.e. $x \in \mathbb{R}^3$, which is the physically meaningful case when magnetic fields are involved. 
 
The scaled dimensionless Pauli-Darwin equation in Coulomb gauge \eqref{eq:coulomb_} is given by
\begin{align}
    i\partial_t u &= -\frac{1}{2} \Delta_A u + V u -\frac{1}{2}  (\mathbf{\sigma} \cdot B) u, \label{eq:PD_Pauli}\\
    -\Delta V &= |u|^2, \label{eq:PD_PoissonV} \\
    -\Delta A&= J-\partial_t \nabla V, \label{eq:PD_PoissonA}
    \qquad B = \nabla \times A \\
    \nabla \cdot A &= 0, \label{eq:coulomb_} \\
        u(0,x) &= u_0(x).  \label{eq:PD_data}
\end{align}
The scaled dimensionless Pauli-Poisswell equation in Lorenz gauge \eqref{eq:lorenz} is given by
\begin{align}
    i\partial_t u &= -\frac{1}{2} \Delta_A u + V u -\frac{1}{2}  (\mathbf{\sigma} \cdot B) u, \label{eq:PPw_Pauli}\\
    -\Delta V &= |u|^2, \label{eq:PPW_PoissonV} \\
    -\Delta A&= J\label{eq:PPW_PoissonA}, \qquad B = \nabla \times A, \\
    \nabla \cdot A + \partial_t V &= 0, \label{eq:lorenz} \\
        u(0,x) &= u_0(x)  \label{eq:PPW_data},
\end{align}
For both equations, the Pauli current density is given by
\begin{equation}
    J(u,A) = \Im \langle u, \nabla_A u\rangle + \frac{1}{2}\nabla \times \langle u, \mathbf{\sigma} u \rangle\label{eq:PD_current}
\end{equation}
and we used the following notations
\begin{itemize}
\item $\Delta_A$ stands for the magnetic Laplace operator $\nabla_A^2 = (\nabla-i A)^2 = \sum_{k=1}^3 (\partial_k-i A_k)^2$.
\item $ \mathbf{\sigma} \cdot B = \sum_{k=1}^3 \sigma_k B_k$, where the $\{\sigma_i\}$ are the Pauli matrices (cf. the notations in Section \ref{sec:notations}).
\item $\langle \cdot , \cdot \rangle = \langle \cdot , \cdot \rangle_{\mathbb{C}^2}$ denotes the inner product in $\mathbb{C}^2$ (antilinear in the first variable).
\item $\langle u, \nabla u \rangle$ is the 3-vector with components $\langle u,\partial_k u\rangle$, $k=1,2,3$. 
\item Similarly, $\langle u, A u \rangle$ and $\langle u, \sigma u \rangle$ are the 3-vectors with entries $\langle u,A_k u\rangle$ and $\langle u, \sigma_k u\rangle$.
\end{itemize}

As we shall see below, the above system implies the continuity equation
\begin{equation}
    \partial_t |u|^2 +\nabla \cdot J = 0,
\end{equation}
Note that using this relation, the term
$J-\partial_t \nabla V$ in the Pauli-Darwin equation can be recast as $\mathbb{P}J$, where $\mathbb{P}$ denotes the Leray projection, i.e. the Fourier multiplier with matrix-valued symbol
\begin{equation}
    m_{ij}(\xi) = \delta_{ij} - \frac{\xi_i \xi_j }{|\xi|^2}, \quad 1\leq i
    ,j\leq 3,
\end{equation}
which is a bounded operator $\mathbb{P}\colon L^p \rightarrow L^p$ for all $1<p<\infty$. The Leray projection maps onto divergence free vector fields, i.e. $\nabla \cdot (\mathbb{P}J) = 0$. \\

The reader will find in Section \ref{sectionphysics} a discussion of the physical relevance and the regime of validity of these equations, which arise as singular limits of the Dirac-Maxwell equation.

\subsection{Main results}

Our main results are the following two theorems on the local wellposedness
and the existence of global weak solutions 
that are, as far as we know, the first rigorous results on the existence of solutions of the Pauli-Darwin and Pauli-Poisswell equations.

\begin{theorem}[\textbf{Local wellposedness}]
Let {$s>3/2$} and $u_{0} \in H^s(\mathbb{R}_x^3)$. Then there exists $T>0$ such that the Pauli-Darwin equation \eqref{eq:PD_Pauli}-\eqref{eq:PD_data} and the Pauli-Poisswell equation \eqref{eq:PPw_Pauli}-\eqref{eq:PPW_data} have unique solutions  $u^{\text{\emph{D}}},u^{\text{\emph{PW}}} \in C([0,T],H^s(\mathbb{R}_x^3))$ with initial data $u_{0}$. The solutions depend continuously on the data in the $H^s$ topology.
\label{thm:local wellposedness}
\end{theorem}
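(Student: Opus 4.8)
The plan is to treat both systems in a unified way by eliminating the potentials and the magnetic field, turning the coupled Pauli-field system into a single quasilinear Schrödinger-type equation for $u$, and then running a standard energy-method fixed-point / a priori estimate argument in $H^s$. First I would solve the elliptic equations: from $-\Delta V = |u|^2$ we get $V = (-\Delta)^{-1}|u|^2$, and from $-\Delta A = \mathbb{P}J(u,A)$ (Pauli-Darwin, using the continuity equation to rewrite $J - \partial_t\nabla V = \mathbb{P}J$) or $-\Delta A = J(u,A)$ with the Lorenz-gauge constraint (Pauli-Poisswell) we get $A$ as a function of $u$ and $A$ itself. The key preliminary step is therefore a fixed-point lemma: for $u \in H^s$ with $s>3/2$, the map $A \mapsto (-\Delta)^{-1}(\text{curl-free or full part of } J(u,A))$ is a contraction on a suitable ball of $H^{s+1}$ (or a weighted Sobolev space accommodating the slow decay of $\nabla(-\Delta)^{-1}$), so that $A = A[u]$ is well-defined, and moreover $u \mapsto A[u]$ is Lipschitz from $H^s$ into $H^{s+1}$ on bounded sets. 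Note $J$ is quadratic in $u$ and $A$ enters only linearly through $\nabla_A u = \nabla u - iAu$ and the extra curl term is $u$-only, so this is a genuine contraction for $\|u\|_{H^s}$ in a fixed ball; one does not even need smallness, just locality, though some care with the $L^\infty$-type bounds and the homogeneous Sobolev norms of $V,A$ is needed since $\mathbb{R}^3$ is unbounded.

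Once $V=V[u]$, $A=A[u]$, $B=B[u]=\nabla\times A[u]$ are known functionals of $u$ that are Lipschitz on bounded sets of $H^s$ with values in $H^{s+1}$ (hence in $W^{1,\infty}$ since $s+1 > 5/2$), the Pauli equation becomes
\begin{equation}
i\partial_t u = -\tfrac12\Delta u + iA[u]\cdot\nabla u + \tfrac{i}{2}(\nabla\cdot A[u])u + \tfrac12|A[u]|^2 u + V[u]u - \tfrac12(\sigma\cdot B[u])u =: -\tfrac12\Delta u + \mathcal{N}(u).
\end{equation}
The nonlinearity $\mathcal{N}$ loses one derivative (the term $A[u]\cdot\nabla u$), but since $A[u]$ is real-valued and divergence-free (in Coulomb gauge) or controlled (in Lorenz gauge) this first-order term is essentially skew-adjoint, so the standard cure is an energy estimate rather than a contraction in $H^s$ directly. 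I would proceed by the classical Kato-type scheme: regularize (e.g. mollify the equation, or add $-i\epsilon\Delta$ / use a frequency truncation $J_\epsilon$), obtain for the regularized problems global-in-$\epsilon$ solutions by ODE theory in $H^s$, then prove a uniform-in-$\epsilon$ a priori bound $\frac{d}{dt}\|u\|_{H^s}^2 \lesssim C(\|u\|_{H^s})\|u\|_{H^s}^2$ via commutator estimates (Kato-Ponce) to handle $\Lambda^s(A\cdot\nabla u) - A\cdot\nabla\Lambda^s u$ and the Moser-type estimates for the remaining terms $V u$, $|A|^2 u$, $(\sigma\cdot B)u$. This gives a uniform existence time $T=T(\|u_0\|_{H^s})$ and, by Aubin–Lions or Bona–Smith-type arguments, a limit $u \in L^\infty([0,T],H^s)\cap C([0,T],H^{s'})$ for $s'<s$; the $L^2$-conservation $\frac{d}{dt}\|u\|_{L^2}^2 = 0$ (from the continuity equation) plus weak continuity upgrades this to $u \in C([0,T],H^s)$.

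Uniqueness and continuous dependence I would handle at the lower level: for two solutions $u_1, u_2$ with the same data, estimate $w = u_1 - u_2$ in $L^2$ (or $H^{s-1}$), using the Lipschitz bounds on $V[\cdot], A[\cdot], B[\cdot]$ and absorbing the derivative loss in $A\cdot\nabla w$ by the skew-adjointness / integration by parts (the term $\int \langle w, A\cdot\nabla w\rangle$ is purely imaginary up to $\|\nabla\cdot A\|_{L^\infty}\|w\|_{L^2}^2$), yielding $\frac{d}{dt}\|w\|_{L^2}^2 \lesssim C\|w\|_{L^2}^2$ and Grönwall; continuous dependence in $H^s$ then follows from the Bona–Smith method (approximate data by smooth data, use uniform bounds plus low-norm convergence and interpolation). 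The main obstacle I anticipate is the interplay between the derivative loss in the magnetic term and the \emph{nonlocal, nonlinear} dependence $A = A[u]$ defined only through its own fixed point: I must make sure the commutator structure that saves the energy estimate (skew-adjointness of $A\cdot\nabla$ modulo lower order) survives when $A$ is itself a quadratic-and-higher functional of $u$ with limited decay on $\mathbb{R}^3$, i.e. that $\nabla\cdot A[u]$ and $A[u]$ are genuinely in $L^\infty$ with the right Lipschitz dependence; controlling $\|A[u]\|_{L^\infty}$ requires handling $\nabla(-\Delta)^{-1}$ of an $L^1\cap H^{s-1}$ function, which is where the low-frequency/decay bookkeeping must be done carefully (likely via $\|A\|_{L^\infty}\lesssim \|J\|_{L^{6/5}}+\|J\|_{L^2}$-type estimates, with $J$ quadratic in $u\in H^s\hookrightarrow L^2\cap L^\infty$).
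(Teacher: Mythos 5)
Your overall strategy coincides with the paper's: resolve $V$ and $A$ elliptically as functionals of $u$ (with $J-\partial_t\nabla V=\mathbb{P}J$ for Darwin), regularize, prove a uniform-in-regularization $H^s$ energy estimate for $s>3/2$ in which the derivative-losing term $A\cdot\nabla u$ is handled by a Kato--Ponce-type commutator estimate plus integration by parts against $\operatorname{div}A$, pass to the limit, prove uniqueness by an $L^2$ difference estimate, and obtain continuous dependence by Bona--Smith. (The paper's regularization is the specific dissipative flow $\partial_t u=-(i+\epsilon)Hu+\epsilon\|u_0\|_{L^2}^{-2}(u,Hu)\,u$, chosen so that the energy decays; this matters for the companion weak-solution theorem, but for local wellposedness alone your generic Kato-type mollification is equally fine.)

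The one step that would fail as written is the construction of $A[u]$ by a contraction ``without smallness''. Since $J(u,A)=\Im\langle u,\nabla u\rangle - A|u|^2+\frac12\nabla\times\langle u,\sigma u\rangle$, the $A$-dependence of your iteration map $A\mapsto(-\Delta)^{-1}J(u,A)$ is $A\mapsto -(-\Delta)^{-1}(A|u|^2)$, whose Lipschitz constant on $\dot H^1$ is of order $\|u\|_{L^3}^2$ (via $\|\nabla(-\Delta)^{-1}(A|u|^2)\|_{L^2}\lesssim\|A\|_{L^6}\,\|u\|_{L^3}^2$). This is not less than $1$ for large data, and no choice of ball rescues it because the offending term is \emph{linear} in $A$. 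The paper's fix is to move $A|u|^2$ to the left-hand side and solve $(-\Delta+|u|^2)A=f$, where $f$ is the $u$-only part of $J$: the sign of $|u|^2$ makes the associated bilinear form coercive on $\dot H^1$, so Lax--Milgram yields existence and uniqueness of $A[u]$ for every $u\in H^1$, and the Lipschitz dependence on $u$ together with the $H^s$ and $L^\infty$ bounds then follow from energy and Riesz-potential estimates. Relatedly, $A[u]$ does not lie in $H^{s+1}$ (its low-frequency part is only in $L^6$, not $L^2$), so the bounds must be phrased for $\nabla A$ in $H^s$ and for $A$ in $L^6\cap L^\infty$, as you partly anticipate in your closing paragraph; with these two repairs the rest of your scheme goes through essentially as in the paper.
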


\begin{remark} In order to gauge the strength of this local well-posedness result, it is instructive to examine how the equations behave under a rescaling of $x$ and $t$. Both equations turn out not to be scale invariant, but the most singular interactions can be captured in the following caricature
$$
i \partial_t u - \Delta u = \nabla (\Delta^{-1} (u \nabla u) u)
$$
which admits the scaling invariance $u(t,x) \longrightarrow \lambda u (\lambda^2 t , \lambda x)$. This means that $H^{1/2}$ is scale invariant, and that the equations should naturally be expected to be locally well-posed on $H^s$ with $s > \frac 12$. This regularity might indeed be reached with the help of more advanced tools; this will be the object of future work.
\end{remark}

\begin{theorem}[\textbf{Global weak solutions}]
\label{th:global weak solutions}
    Let $u_0 \in H^1(\mathbb{R}_x^3)$. Then the Pauli-Darwin equation \eqref{eq:PD_Pauli}-\eqref{eq:PD_data} {and the Pauli-Poisswell equation \eqref{eq:PPw_Pauli}-\eqref{eq:PPW_data}} each have at least one weak solution such that $u^{\text{D}}{, u^{\text{PW}}}  \in BC_{w}(\mathbb{R}_t,H^1(\mathbb{R}_x^3))$, where $BC_{w}$ denotes the space of bounded weakly continuous functions.
\end{theorem}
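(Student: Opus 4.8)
The plan is to construct the global weak solution by a compactness argument applied to a suitably regularized family of problems, using the local strong solutions of Theorem \ref{thm:local wellposedness} together with the conserved energy to propagate an $H^1$ bound globally in time. The first step is to identify the conserved quantities: charge $\|u(t)\|_{L^2}^2 = \|u_0\|_{L^2}^2$ from the continuity equation, and an energy of the schematic form
\begin{equation*}
E(u) = \frac12 \int_{\mathbb{R}^3} |\nabla_A u|^2 \, dx + \frac14 \int_{\mathbb{R}^3} |\nabla V|^2 \, dx - \frac12 \int_{\mathbb{R}^3} \langle u, (\sigma\cdot B) u\rangle \, dx + (\text{field energy terms}),
\end{equation*}
which is formally constant along the flow. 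The key analytic point here is to show that control of $E(u)$ plus the charge yields an a priori bound on $\|u(t)\|_{H^1}$ uniform in $t$: the magnetic gradient term controls $\|\nabla u\|_{L^2}$ modulo lower-order terms involving $A$, and $A$ is recovered from $u$ via $-\Delta A = \mathbb{P}J$ (Pauli-Darwin) or the Lorenz-gauge wave-type relation (Pauli-Poisswell), giving estimates for $A$ in terms of $u$ through elliptic regularity and Sobolev embedding; the spin-magnetic term $\langle u,(\sigma\cdot B)u\rangle$ and the diamagnetic cross-terms must be absorbed using interpolation and the Gagliardo–Nirenberg/Sobolev inequalities together with a smallness or bootstrap argument, or else shown to be of subcritical order so that they cannot destroy the bound.

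The second step is the regularization. I would introduce a parameter $\varepsilon>0$, for instance by mollifying the nonlinearity (convolving $u$ with an approximate identity wherever it enters $V$, $A$, $J$, and $B$) or by adding a parabolic regularization $i\varepsilon\Delta\partial_t$ or $-i\varepsilon\Delta u$ type term, chosen so that (i) the regularized problem is globally wellposed in $H^1$ (or $H^s$, $s>3/2$) by the fixed-point/energy methods behind Theorem \ref{thm:local wellposedness}, and (ii) the regularized problems still satisfy the charge conservation and an energy inequality $E_\varepsilon(u_\varepsilon(t)) \le E_\varepsilon(u_0)$ with $E_\varepsilon(u_0)\to E(u_0)$. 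This yields a family $\{u_\varepsilon\}$ bounded in $L^\infty_t H^1_x$ uniformly in $\varepsilon$, hence also (via the equation) with $\partial_t u_\varepsilon$ bounded in $L^\infty_t H^{-1}_x$ (or some negative Sobolev space). By Banach–Alaoglu I extract a weak-$\ast$ limit $u$ in $L^\infty_t H^1_x$, and by Aubin–Lions–Simon (using the $H^1 \hookrightarrow\hookrightarrow L^2_{loc}$ compact embedding together with the $\partial_t u_\varepsilon$ bound) I upgrade to strong convergence $u_\varepsilon \to u$ in $C_{loc}(\mathbb{R}_t, L^2_{loc}(\mathbb{R}^3_x))$, which is exactly what is needed to pass to the limit in the quadratic-and-higher nonlinear terms.

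The third step is to pass to the limit in each term of the weak formulation. The potentials converge: $V_\varepsilon = (-\Delta)^{-1}|u_\varepsilon|^2 \to (-\Delta)^{-1}|u|^2 = V$ and similarly $A_\varepsilon \to A$, using the strong local convergence of $u_\varepsilon$ to handle $|u_\varepsilon|^2$ and $J(u_\varepsilon, A_\varepsilon)$, plus the uniform $H^1$ bound to control tails; the products $V_\varepsilon u_\varepsilon$, $\Delta_{A_\varepsilon} u_\varepsilon$, and $(\sigma\cdot B_\varepsilon)u_\varepsilon$ then converge in the sense of distributions against test functions. Finally, weak continuity in time, $u \in BC_w(\mathbb{R}_t, H^1_x)$, follows from the uniform $H^1$ bound plus continuity of $t\mapsto \langle u(t), \phi\rangle$ for $\phi$ in a dense set (obtained from the equation, since $\partial_t u \in L^\infty_t H^{-1}_x$), together with a standard argument upgrading weak continuity on a dense set to weak continuity everywhere with values in $H^1$. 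I expect the main obstacle to be Step 1: closing the global-in-time $H^1$ a priori estimate, because the energy is not manifestly coercive — the spin term $-\frac12\int\langle u,(\sigma\cdot B)u\rangle$ has no sign and $B$ is itself of the same order as $\nabla u$ through the current $J$, so one must carefully exploit the structure (diamagnetic inequality, the precise $\varepsilon$-scaling of the small contributions, and elliptic gains in the Poisson/Poisswell equations) to show this term cannot overwhelm the positive magnetic kinetic energy; a secondary difficulty is choosing a regularization that simultaneously preserves the delicate gauge-compatible cancellations behind the energy identity and is strong enough to give global regularized solutions.
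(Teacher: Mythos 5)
Your overall architecture (conserved charge and energy giving a uniform $H^1$ bound, a regularized family that is globally solvable and satisfies an energy inequality, Banach--Alaoglu plus Aubin--Lions, then passage to the limit in each nonlinear term) is exactly the paper's strategy. But the point you yourself flag as ``the main obstacle'' --- that the energy written with a separate Stern--Gerlach term $-\frac12\int\langle u,(\sigma\cdot B)u\rangle$ is not manifestly coercive --- is left unresolved in your proposal, and it is resolved in the paper by a specific algebraic identity rather than by interpolation or a smallness/bootstrap argument. Namely, $(\sigma\cdot\nabla_A)^2=\Delta_A+\sigma\cdot B$, so the conserved energy is
\begin{equation*}
E(t)=\|(\sigma\cdot\nabla_A)u\|_{L^2}^2+\|\nabla A\|_{L^2}^2+\|\nabla V\|_{L^2}^2,
\end{equation*}
a sum of squares with no indefinite term. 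The $H^1$ bound then follows from $\|\nabla u\|_{L^2}\le\|(\sigma\cdot\nabla_A)u\|_{L^2}+\|(\sigma\cdot A)u\|_{L^2}$, H\"older and Sobolev ($\|Au\|_{L^2}\lesssim\|\nabla A\|_{L^2}\|u\|_{L^2}^{1/2}\|\nabla u\|_{L^2}^{1/2}$), and absorption of the half power of $\|\nabla u\|_{L^2}$. Without this reformulation there is no obvious way to close Step 1, so as written your a priori estimate does not go through.

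Two further points where your sketch would need real work. First, your proposed regularizations (mollifying the nonlinearity, or adding a dissipative $\epsilon\Delta u$ term) are not shown to satisfy the energy inequality you require in (ii); the paper explicitly remarks that the naive parabolic regularization of Guo--Nakamitsu--Tsutsumi type does not yield a simple energy-decay argument here because of the spin term, and instead uses
\begin{equation*}
\partial_t u=-(i+\epsilon)Hu+\epsilon\,\frac{(u,Hu)_{L^2}}{\|u_0\|_{L^2}^2}\,u,\qquad H=-\tfrac12(\sigma\cdot\nabla_A)^2+V,
\end{equation*}
which conserves $\|u\|_{L^2}$ exactly and gives $\frac{d}{dt}E=-4\epsilon\bigl(\|Hu\|_{L^2}^2-(u,Hu)^2/\|u\|_{L^2}^2\bigr)\le0$ by Cauchy--Schwarz. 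Second, the strong convergence $A_n\to A$ that you dispose of with ``similarly'' is not routine: the equation for $A$ is $-\Delta A+|u|^2A=\Im\langle u,\nabla u\rangle+\frac12\nabla\times\langle u,\sigma u\rangle$, i.e.\ nonlinear in $A$ itself, and the paper devotes a separate lemma to it, splitting the source into near and far contributions and using a pointwise bound on the fundamental solution of $-\Delta+|u|^2$ obtained from the maximum principle. These three ingredients are the substance of the proof; the compactness scaffolding around them is standard and you have it right.
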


\subsection{The Schrödinger-Maxwell equation}

While there are no results on the wellposedness of the Pauli-Darwin and Pauli-Poisswell equations, the magnetic Schrödinger-Maxwell equation has been studied in the mathematical literature.
For scalar $\psi\in L^2(\mathbb{R}^d,\mathbb{C})$ it is given by
\begin{align}
& i \partial_t \psi = -\frac{1}{2}\Delta_A \psi + V \psi,
\label{eq:Schrödinger_Maxwell} \\
& -\Delta  V +\frac{1}{c}\partial_t(\nabla \cdot A )= |\psi|^2 \label{eq:Maxwell1}\\
& (\frac{1}{c}\partial^2_t-\Delta)  A + \nabla(\nabla \cdot A + \frac{1}{c}\partial_t V) =  \frac{1}{c}J \label{eq:Maxwell2},
\end{align}
where 
\begin{equation}
  J = \Im(\overline{\psi}(\nabla-i A)\psi)
  \label{eq:SMcurrent}
\end{equation} 
is the current density of the magnetic Schrödinger equation. Note that no gauge is fixed in \eqref{eq:Maxwell1}-\eqref{eq:Maxwell2}.

The Schrödinger-Maxwell equation is inconsistent in $1/c$: In \eqref{eq:Schrödinger_Maxwell}-\eqref{eq:SMcurrent}, the Lorentz invariant, fully relativistic Maxwell equations \eqref{eq:Maxwell1}-\eqref{eq:Maxwell2} are coupled to the Galilei invariant magnetic Schrödinger equation \eqref{eq:Schrödinger_Maxwell}, which is $O(1)$ non-relativistic since it lacks the $O(1/c)$ spin term. The first result on global wellposedness was established in 1986 by Nakamitsu and Tsutsumi in \cite{nakamitsu1986cauchy} using energy estimates. In 1995, Guo, Nakamitsu and Strauss obtained  global weak solutions in the energy space without uniqueness by parabolic regularization, cf. \cite{guo1995global}. Global wellposedness above the energy space was proved by Nakamura and Wada in 2005 and 2007 in \cite{nakamura2005local, nakamura2007global} using refined Koch-Tzvetkov estimates. This was improved by Bejenaru and Tataru in 2009 to the energy space $H^1$ in \cite{bejenaru2009global} by constructing a parametrix. In 2003, Ginibre and Velo studied the  scattering of the Schrödinger-Maxwell system, cf. \cite{ginibre2003long}. An extended model with additional power nonlinearity was studied by Antonelli, Marcati and Scandone 2019 in \cite{antonelli2019global}.

The Schrödinger-Maxwell equation with spin but without the Poisson part for the electric potential was studied by Kieffer in 2020 in \cite{kieffer2020time}.

\subsection{Organization of the article}

In section \ref{physics} we present the modeling of the equations as first order semi-relativistic approximations of the fully relativistic Dirac-Maxwell system. The notations and some useful formulas used in the paper are given in section \ref{notations}. In section \ref{sec:preliminaries} we present the energy structure and the regularized equation, for which the energy decays. We collect some elliptic estimates for the Poisson equations in section \ref{Elliptic estimates}. Local wellposedness of the regularized equation is then established in section \ref{Parabolic regularization}. Theorem \ref{thm:local wellposedness} is proved in section \ref{sec:lwp} via an a priori estimate in $H^s$, $s>3/2$. Theorem \ref{th:global weak solutions} is proved in section \ref{sec:weak sol} by compactness.

\section{The Physics}
\label{physics}

\subsection{The equations including the physical dimensions}

\label{sectionphysics}

The Pauli-Darwin and Pauli-Poisswell equations were written above in a more concise form by setting all constants equal to 1. But in order to discuss the physical relevance of these equations, physical constants are of course a key element.

For a fermion of mass $m$ and charge $q$, the \emph{Pauli-Darwin equation} in Coulomb gauge \eqref{eq:PD_Coulomb_scaled}, cf. \cite{moller2023models}, for the 2-spinor $u=(u_1,u_2)^\top$ depending on $(t,x) \in \mathbb{R} \times \mathbb{R}^3$ is given by
\begin{align}
    i\hbar\partial_t u &= -\frac{1}{2m}(\hbar\nabla-i\frac{q}{c}A)^2u + qV u - \frac{q\hbar}{2mc} (\sigma \cdot B) u, \label{eq:PD_Pauli_scaled}\\
    -\Delta V &= |u|^2, \label{eq:PD_PoissonV_scaled}\\
    -\Delta A &= \frac{1}{c}(J-\partial_t \nabla V), \qquad B = \nabla \times A, \label{eq:PD_PoissonA_scaled} \\
    \nabla\cdot A &= 0, \label{eq:PD_Coulomb_scaled}
    \\   u(0,x) &= u_0^{\text{D}}(x). \label{eq:PD_data_scaled}
\end{align}
The \emph{Pauli-Poisswell equation} in Lorenz gauge \eqref{eq:PPW_Lorenz_scaled}, cf. \cite{masmoudi2001selfconsistent}, is given by
\begin{align}
    i\hbar\partial_t u &= -\frac{1}{2m}(\hbar\nabla-i\frac{q}{c}A)^2u + qV u - \frac{q\hbar}{2mc} (\sigma \cdot B) u, \label{eq:PPW_Pauli_scaled}\\
    -\Delta V &= |u|^2, \label{eq:PPW_PoissonV_scaled}\\
    -\Delta A &= \frac{1}{c}J, \qquad B = \nabla \times A, \label{eq:PPW_PoissonA_scaled} \\
    \nabla\cdot A +\frac{1}{c}\partial_t V &= 0, \label{eq:PPW_Lorenz_scaled}
    \\   u(0,x) &= u_0^{\text{PW}}(x). \label{eq:PPW_data_scaled}
\end{align}
For both equations, the \emph{Pauli current density} is given by
\begin{equation}
    J(u,A) =\frac{1}{m}\Im \langle u , (\hbar\nabla -i\frac{q}{c}A)u \rangle +\frac{\hbar}{2m}\nabla \times \langle u , \sigma u \rangle, \label{eq:PD_current_scaled}
\end{equation}
The Pauli current density \cite{nowakowski1999quantum} extends the current density $\hbar\Im\overline{\psi}\nabla \psi$ of the Schrödinger equation for a scalar wave function $\psi$ not only to the magnetic Schrödinger current density $\Im\overline{\psi}(\hbar\nabla-i(q/c)A) \psi$, but also includes a divergence-free spin term, which couples the two spin components.

The quantity $|u|^2= \langle u,u\rangle= |u_1|^2+|u_2|^2$ is the scalar charge density. $V\colon \mathbb{R}_t\times \mathbb{R}^3_x\rightarrow \mathbb{R}$ and $A\colon \mathbb{R}_t\times \mathbb{R}^3_x\rightarrow \mathbb{R}^3$  are the self-consistent electric and magnetic potentials, coupled to the Pauli equation via Darwin's equations \eqref{eq:PD_PoissonV_scaled}-\eqref{eq:PD_PoissonA_scaled}, cf. \cite{degond1993analysis, krause2007unified, shiroto2023improved}, and the Poisswell equations \eqref{eq:PPW_PoissonV_scaled}-\eqref{eq:PPW_PoissonA_scaled}, cf. \cite{besse2007numerical, masmoudi2001selfconsistent}, with the charge density and current density as sources. Note that in Darwin's equations the source for $A$ is the {transversal} Pauli current density $J-\partial_t \nabla V$, which appears due to the Coulomb gauge.

\begin{remark}
    Note that in \cite{masmoudi2001selfconsistent}, there is a factor $1/2$ missing in front of the curl term of the current \eqref{eq:PD_current_scaled} and in \cite{moller2024pauli} the sign in front of the same term is wrong.
\end{remark}

\subsection{Semi-relativistic approximation}

The self-consistent {Pauli-Darwin} and Pauli-Poisswell equations for a 2-spinor were introduced in \cite{masmoudi2001selfconsistent, moller2023models}  and are $O(1/c)$ semi-relativistic approximations of the relativistic Dirac-Maxwell equation for a 4-spinor in {Coulomb gauge} and {Lorenz gauge}, respectively. They are first-order semi-relativistic, self-consistent models for an electrically charged spin-$1/2$-particle with self-interaction with the electromagnetic field and spin, where the 2-spinor $u=(u_1,u_2)^{\top}$ represents the two spin states $u_1,u_2$ of a fermion and corresponds to the electron component of the Dirac spinor, obtained by a Foldy-Wouthuysen transform and projection on the upper component, cf. \cite{bechouche1998semi, bechouche2005asymptotic, foldy1950dirac, masmoudi2003nonrelativistic, mauser1999rigorous, mauser2000semi}. It is important to note here that both models are $O(1/c)$. Even though the Darwin approximation is $O(1/c^2)$, the Pauli-Darwin system is $O(1/c)$ since the Pauli equation is $O(1/c)$.

In fact, it is possible to derive a fully $O(1/c^2)$ model, where Darwin's equations are coupled to the Pauli equation containing all $O(1/c^2)$ terms, cf. \cite{itzykson2012quantum, mauser1999rigorous, moller2023thesis}. This model, which we call \emph{Pauli$_2$-Darwin equation}, is given by
\begin{align}
\begin{split}
i\hbar\partial_t u =& -\frac{1}{2m}(\hbar \nabla-i\frac{q}{c}A)^2u + {q}V u - \frac{q\hbar}{2mc} (\sigma \cdot B) u, \\ 
    &-\frac{\hbar^4}{8m^3c^2}\Delta^2 u -\frac{q\hbar }{4m^2c^2}\left(\frac{i}{2}\sigma \cdot (\nabla \times E) - (\sigma \cdot (E \times  \nabla))\right)u \\
    &- \frac{\hbar q}{8m^2c^2}(\nabla \cdot E)u, \qquad E =-\nabla V-\frac{1}{c}\partial_t A, \quad B=\nabla \times A \label{eq:Pauli2}
    \end{split}\\
    -\Delta V =& |u|^2, \qquad
    -\Delta A = \frac{1}{c}\mathbb{P}J. 
\end{align}

Returning to Pauli-Poisswell/Darwin, the self-consistent electromagnetic potentials $V$ and $A$ are given by a magnetostatic, i.e. slow with respect to $1/c$, approximation of Maxwell's equations via $1+3$ Poisson equations with density and current density as source terms, \cite{besse2007numerical, krause2007unified, shiroto2023improved}.

The models include:
\begin{enumerate}[(\alph*),topsep=0pt,itemsep=-1ex,partopsep=1ex,parsep=1ex]
    \item the \emph{magnetic field} in the "magnetostatic" $O(1/c^2)$ (for Darwin) and $O(1/c)$ (for Poisswell) approximations by 3 Poisson type equations with the (transversal) current density,
\item the \emph{spin} of the particle, coupled to the magnetic field $B$ by the Stern-Gerlach term in the  Pauli equation as the $O(1/c)$ approximation of the Dirac equation.
\end{enumerate} 

Note that the Pauli equation does not include the antiparticle, which appears intrinsically in the Dirac spinor. The negative part of the energy of the Dirac equation which corresponds to the positron is a major obstacle for obtaining global wellposedness (local wellposedness can be obtained by exploiting a null structure, cf. \cite{d2010null, masmoudi2003uniqueness}, illposedness below charge (i.e. $L^2$) regularity was shown in \cite{selber2021ill}).\\

The key semi-relativistic feature of the Pauli equation is the \emph{spin-magnetic Laplacian} or \emph{Pauli operator}, acting on $L^2(\mathbb{R}\times\mathbb{R}^3,\mathbb{C}^2)$,
\begin{equation}
   \frac{1}{2m} (\sigma \cdot \nabla_A)^2 = \frac{1}{2m}(\sigma\cdot(\hbar\nabla-i\frac{q}{c}A))^2 = \frac{1}{2m}(\hbar\nabla-i\frac{q}{c}A)^2 + \frac{\hbar q}{2mc}(\sigma\cdot B)u,
    \label{eq:spin_magnetic_laplacian}
\end{equation}
which can be written as the sum of the spinless \emph{magnetic Laplacian}, occurring in the usual magnetic Schrödinger equation,
\begin{equation}
    \frac{1}{2m}\Delta_A = \frac{1}{2m}(\hbar\nabla-i\frac{q}{c}A)^2,
    \label{eq:magnetic_laplacian}
\end{equation}
and the \emph{Stern-Gerlach term}  
\begin{equation}
    \frac{\hbar q}{2mc}(\sigma\cdot B)u,
    \label{eq:stern_gerlach_term}
\end{equation}
coupling spin and magnetic field, see also Section \ref{sec:useful_identities}. The other semi-relativistic feature is the spin term of the current density
\begin{equation}
    \frac{\hbar}{2m}\nabla \times \langle u ,\sigma u\rangle,
\end{equation}
which was derived in the $O(1/c)$ approximation of the Dirac current density in \cite{nowakowski1999quantum}. Note that its addition doesn't change the continuity equation since $\text{div curl} =0$. Also note that in \eqref{eq:spin_magnetic_laplacian}, $|A|^2$ is $O(1/c^2)$.

\begin{remark}{\underline{Caveat on "Darwin":}} Equations \eqref{eq:PD_PoissonV}-\eqref{eq:PD_PoissonA} and \eqref{eq:PD_PoissonV_scaled}-\eqref{eq:PD_PoissonA_scaled} are referred to as \emph{Darwin's equations} and are the $O(1/c^2)$ semi-relativistic approximation of Maxwell's equations. The \emph{Poisswell equations} \eqref{eq:PPW_PoissonV}-\eqref{eq:PPW_PoissonA} and \eqref{eq:PPW_PoissonV_scaled}-\eqref{eq:PPW_PoissonA_scaled}  are the $O(1/c)$ semi-relativistic approximation of Maxwell's equations, cf. \cite{besse2007numerical, krause2007unified, moller2023darwin, pallard2006initial, seehafer2009local, shiroto2023improved}. 

On the other hand, the \emph{Darwin term} in relativistic quantum mechanics refers to a second order term in the semi-relativistic approximation of the Dirac equation, i.e. the Pauli$_2$ equation \eqref{eq:Pauli2} and is related to the \emph{Zitterbewegung}, cf, \cite{itzykson2012quantum, mauser1999rigorous, mauser2000semi}. The Pauli equation as the the first order semi-relativistic approximation of the Dirac equation does \emph{not} include the Darwin term.
\end{remark}


\subsection{From Maxwell to Darwin and Poisswell} Maxwell's equations for the potentials with no gauge fixed are given by
\begin{align}
    & -\Delta  V +\frac{1}{c}\partial_t(\nabla \cdot A )= |u|^2 \label{eq:MaxwellV}\\
& (\frac{1}{c^2}\partial^2_t-\Delta)  A + \nabla(\nabla \cdot A + \frac{1}{c}\partial_t V) =  \frac{1}{c}J \label{eq:MaxwellA}
\end{align}
\textbf{Darwin's equations}, i.e. the second order approximation of Maxwell's equations in Coulomb gauge, are given by
\begin{align}
    - \Delta V &= |u|^2 \label{Darwin}\\
    - \Delta A & = \frac{1}{c}(J-\partial_t \nabla V) \\
    \nabla \cdot A &= 0 \label{eq:coulomb}
\end{align}
where the term $\partial_t \nabla V$ corresponds to the \emph{longitudinal current} $J_L$ and $J_T= J-\partial_t \nabla V$ corresponds to the \emph{transversal current} $J_T$. Note that $\nabla \times J_L = 0$ and $\nabla \cdot J_T = 0$.  Formally, Darwin's equations can be obtained
by writing Maxwell's equations \eqref{eq:MaxwellV}-\eqref{eq:MaxwellA} in Coulomb gauge,
\begin{align}
    -\Delta V = |u|^2, && -\Delta A + \frac{1}{c^2}\partial_t^2 A = \frac{1}{c}(J - \partial_t \nabla V) && \nabla \cdot A = 0, 
\end{align}
and, in order to obtain a $O(1/c^2)$ approximation, dropping $\tfrac{1}{c^2}\partial_t^2 A$, which is $O(1/c^3)$, due to $A$ being $O(1/c)$. 

By defining the longitudinal and transversal electric fields $E_L =-\nabla V$ and $E_T = \tfrac{1}{c}\partial_t A$ we obtain the following form of Darwin's equations:
\begin{align}
\label{eq:Darwin fields 1}
    \nabla \cdot E_L = |u|^2, && \nabla \cdot B = 0, \\
    \nabla \times B - \frac{1}{c}\partial_t E_L = J, && \nabla \times E_T + \frac{1}{c}\partial_t B = 0. \label{eq:Darwin fields 2}
\end{align}
Consider the formal asymptotic expansion 
\begin{align}
E = E^0 + \frac{1}{c} E^1 + O(\frac{1}{c^2}), && B = B^0 + \frac{1}{c} B^1 +O(\frac{1}{c^2}), \\
V = V^0 + \frac{1}{c} V^1 + O(\frac{1}{c^2}), && A = A^0 + \frac{1}{c} A^1 +O(\frac{1}{c^2}).
\end{align} 
Then Maxwell's equations at $O(1)$ are given by:
\begin{align}
    \nabla \times E^0 &= 0, & \nabla \cdot E^0 &= |u|^2 \\
    \nabla \times B^0 &= 0, & \nabla \cdot B^0 &= 0 \\
    \nabla \cdot A^0 &= 0 & E^0 &= -\nabla V^0
\end{align}
Therefore
\begin{align}
   -\Delta V^0 = |u|^2 && B^0 \equiv A^0 \equiv 0 
\end{align}
At $O(1/c)$:
\begin{align}
    \nabla \times E^1 &= 0, & \nabla \cdot E^1 &= 0 \\
    -\partial_t E^0 + \nabla \times B^1 &= J, & \nabla \cdot B^0 &= 0 \label{1}\\
    \nabla \cdot A^1 &= 0 & B^1 &= \nabla \times A^1\label{2}
\end{align}
Using $\nabla \times (\nabla \times A^1) = \nabla (\nabla \cdot A^1) - \Delta A^1$, \eqref{1} and \eqref{2}, we obtain for $A^1$:
\begin{align}
    -\Delta A^1 = J- \partial_t \nabla V^0
\end{align}
Therefore $V = V^0 + O(1/c^2)$, $A = A^1 + O(1/c^2)$, which yields the Darwin equations \eqref{Darwin}-\eqref{eq:coulomb}.

The \textbf{Poisswell equations} are given by
\begin{align}
    -\Delta V  &= |u|^2, \\
    -\Delta A &= \frac{1}{c}J,  \\
    \nabla \cdot A + \frac{1}{c}\partial_t V &= 0,
\end{align}
which can be formally obtained by writing Maxwell's equations \eqref{eq:MaxwellV}-\eqref{eq:MaxwellA} in Lorenz gauge,
\begin{align}
    -\Delta V + \frac{1}{c^2} \partial_t^2 V = |u|^2 && -\Delta A +\frac{1}{c^2}\partial_t^2 A = J  && \frac{1}{c}\partial_t V + \nabla \cdot A = 0.
\end{align}
and dropping $\tfrac{1}{c^2} \partial_t^2 V$ and $\tfrac{1}{c^2}\partial_t^2 A$. 
The Poisswell equations can be similarly derived by a formal asymptotic expansion as above together with the Lorenz gauge instead of the Coulomb gauge. Evidently, this does not change Maxwell's equations for the fields \eqref{eq:Darwin fields 1}-\eqref{eq:Darwin fields 2}, cf. \cite{shiroto2023improved}. There is a $O(1/c^2)$ term hidden in the Poisswell approximation: The transversal electric field $E_L = \tfrac{1}{c}\partial_t A$ is $O(1/c^2)$ since $A$ is $O(1/c)$. In the Poisswell approximation, it satisfies the following Poisson equation, cf. \cite{besse2007numerical}:
\begin{equation}
    -\Delta (\partial_t A) = \frac{1}{c} \partial_t J
\end{equation}

In the Darwin approximation no $O(1/c^2)$ terms are missing since the equation for the electric potential in the Coulomb gauge is $-\Delta V = |u|^2$ (hence the $O(1/c^2)$ term $\tfrac{1}{c^2}\partial_t^2 V$ does not appear) and in the equation for the magnetic potential the term $\tfrac{1}{c^2}\partial_t^2 A$  is $O(1/c^3)$ and can be dropped. In the Poisswell approximation, the equation for the electric potential in Lorenz gauge is $-\Delta V +\tfrac{1}{c^2}\partial_t^2 V = |u|^2$ and one drops the $O(1/c^2)$ term $\tfrac{1}{c^2}\partial_t^2 V$. Therefore the Poisswell approximation is \emph{not} $O(1/c^2)$. However, there are $O(1/c^2)$ terms present, as explained above.

\begin{remark}
    In the semiclassical limit the Pauli-Darwin equation converges to the Vlasov-Darwin equation for the Wigner measure or the Euler-Darwin equation for monokinetic Wigner measures. Respectively, the semiclassical limit of the Pauli-Poisswell equation is the Vlasov-Poisswell equation and the Euler-Poisswell equation, cf. \cite{moller2023darwin, yang2023semi}. The Vlasov-Darwin equation was studied in \cite{bauer2005, hankwan2018, pallard2006initial, seehafer2008} and the Vlasov-Poisswell equation in \cite{besse2007numerical, seehafer2009local}. The semiclassical limit of the related Pauli-Poisson model to the magnetic Vlasov-Poisson equation, where the magnetic field is external, was shown in \cite{moller2024pauli}.
\end{remark}

\section{Notations and formulas}
\label{notations}
\label{sec:notations}

\subsection{Inequalities} For two quantities $A,B$, we write $A \lesssim B$ if there exists a constant $C$ such that $A \leq CB$. 
We write $A \sim B$ if $A \lesssim B$ and $B \lesssim A$.  

We will also write $A \leq F(B)$ to mean that there exists a continuous function $F$ such that the quantities $A$ and $B$ satisfy this inequality. Note that the function $F$ can differ from one occurrence to the next.

\subsection{Fourier multipliers and Sobolev spaces}

For $s$ a real number, the operator $D^s$ is the Fourier multiplier with symbol $|\xi|^s$.

The homogeneous Sobolev space $\dot{W}^{s,p}$  of order $s$ is defined by the norm $\|D^s f\|_{L^p}$ and  the inhomogeneous Sobolev space $W^{s,p}$  of order $s$ defined by the norm $ \|\langle D \rangle^s f \|_{L^p}$ where $\langle \xi \rangle = (1+|\xi|^2)^{1/2}$. If $p=2$, we denote $\dot H^s$ and $H^s$ instead of $\dot W^{s,p}$ and $W^{s,p}$.

\subsection{Linear and multilinear operations}
If $v,w \in \mathbb{C}^2$, their inner product is defined as
\begin{equation*}
    \langle v , w\rangle_{\mathbb{C}^2} = \langle v , w\rangle = \overline{v_1} w_1 + \overline{v_2} w_2 .
\end{equation*}
By contrast, if $a,b \in \mathbb{C}^3$, we denote $a \cdot b$ for
$$
a \cdot b = \sum_{i=1}^3 a_i b_i.
$$
The $L^2$ inner product of $v$ and $w$ valued in $\mathbb{C}^2$ is denoted by
$$
(v,w)_{L^2} = (v,w) = \int \langle v,w \rangle \dd x.
$$
We will also denote
$$
a \cdot \nabla = \sum_{i=1}^3 a_i \partial_i, \qquad a \cdot \sigma = \sum_{i=1}^3 a_i \sigma_i. 
$$
Here, the Pauli matrices are given by
\begin{align*}
    \sigma_1 = \begin{pmatrix}
    0 & 1 \\ 1 & 0
    \end{pmatrix}, && 
     \sigma_2 = \begin{pmatrix}
    0 & -i \\ i & 0
    \end{pmatrix}, &&
     \sigma_3 = \begin{pmatrix}
    1 & 0 \\ 0 & -1
    \end{pmatrix}
\end{align*}
They obey the following product laws
\begin{equation}
\label{productlaws}
\begin{split}
& \sigma_i^2 = \operatorname{Id} \qquad \mbox{for $i=1,2,3$}\\
& \sigma_i \sigma_j =  i \sum_{k=1}^3 \varepsilon_{ijk} \sigma_k \qquad \mbox{for $i \neq j$},
\end{split}
\end{equation}
where $\varepsilon_{ijk}$ denotes the Levi-Civita symbol.

\subsection{Useful identities}
\label{sec:useful_identities}

We define the magnetic gradient \begin{equation}
\nabla_A = \nabla-iA = (\partial_k -i A_k)_{k=1,2,3},
\end{equation}
the magnetic Laplacian
\begin{equation}
     \Delta_A = (\nabla_A)^2 = \sum_{k=1}^3 (\partial_k - iA_k)^2 = \Delta-2iA\cdot \nabla  - i(\text{div} A) -|A|^2
\end{equation}
and the spin-magnetic Laplacian
\begin{equation}
 (\sigma \cdot \nabla_A)^2 = \left( \sum_{k=1}^3 \sigma_k (\partial_k - iA_k) \right)^2. 
\end{equation}
It can be written as
$$
 (\sigma \cdot \nabla_A)^2 = \Delta_A + \sigma \cdot B.
$$
To prove this identity, we proceed as follows:
\begin{align*}
(\sigma \cdot \nabla_A)^2 & = \sum_{k,\ell} \sigma_k \sigma_\ell (\partial_k - iA_k)(\partial_\ell - iA_\ell) \\
& = \sum_k (\partial_k - iA_k)^2 + i \sum_{k,\ell,m} \epsilon_{k\ell m} \sigma_m (\partial_k - iA_k)(\partial_\ell - iA_\ell) \\
& = \Delta_A + \sum_{k,\ell,m} \epsilon_{k\ell m} \sigma_m \partial_k A_\ell = \Delta_A + \sigma \cdot B
\end{align*}
Here, we used \eqref{productlaws} in the second equality and the cancellations following from the antisymmetry of the Levi-Civita symbol in the third equality.

Note that the spin-magnetic Laplacian (just like the magnetic Laplacian) is symmetric with respect to the $L^2$ scalar product:
\begin{equation}
    \int_{\mathbb{R}^3} \langle f,(\sigma \cdot \nabla_A)^2 g\rangle \dd x = \int_{\mathbb{R}^3} \langle(\sigma \cdot \nabla_A)^2 f, g\rangle \dd x.
\end{equation}


Finally, the current density can be written as
\begin{align*}
    J(u,A) = \Im \langle \sigma u, (\sigma \cdot \nabla_A) u \rangle = \Im \langle \sigma_j u, \sum_{k=1}^3 \sigma_k(\partial_k -iA_k) u \rangle. \label{eq:J_k}
\end{align*}
Indeed, using once again the properties of Pauli matrices,
\begin{align*}
\operatorname{Im} \langle \sigma_j u, \sum_k \sigma_k (\partial_k - i A_k) u \rangle 
& =  \operatorname{Im} \langle u , \sum_k \sigma_j \sigma_k (\partial_k - i A_k) u \rangle \\
& =  \operatorname{Im} \langle u ,  (\partial_j - i A_j) u \rangle + \sum_{k,l}  \operatorname{Im} \langle u , \epsilon_{jkl} i \sigma_l (\partial_k - i A_k) u \rangle.
\end{align*}
Furthermore,
\begin{align*}
\sum_{k,l}  \operatorname{Im} \langle u , \epsilon_{jkl} i \sigma_l (\partial_k - i A_k) u \rangle & = \sum_{k,l}  \operatorname{Re} \langle u , \epsilon_{jkl}  \sigma_l (\partial_k - i A_k) u \rangle = \sum_{k,l} \epsilon_{jkl}  \operatorname{Re} \langle u , \sigma_l \partial_k u \rangle \\
& = \frac{1}{2} \sum \epsilon_{jkl} \partial_k  \langle u , \sigma_l u \rangle = \frac{1}{2} \left[ \nabla \times \langle u , \sigma u \rangle \right]_j.
\end{align*}

\section{The energy structure}
\label{sec:preliminaries}

\subsection{Conservation laws}

The equations conserve charge and energy.

\begin{lemma}
Let $u$ be a solution of the Pauli-Darwin equation \eqref{eq:PD_Pauli}-\eqref{eq:PD_data} or the Pauli-Poisswell equation \eqref{eq:PPw_Pauli}-\eqref{eq:PPW_data} which is smooth and decays rapidly in space. Then the continuity equation
\begin{equation} 
    \partial_t |u|^2 +  \text{\emph{div}}J = 0,
\end{equation}
holds, which implies the conservation of the charge 
\begin{equation}
\label{eq:charge}
    Q(t) = \|u(t)\|_{L^2}^2.
\end{equation}
Furthermore, the energy
\begin{align}
\label{eq:energy}
    E(t) = \|(\sigma\cdot \nabla_A )u(t)\|_{L^2}^2 + \|\nabla A(t)\|_{L^2}^2 + \|\nabla V(t)\|_{L^2}^2
\end{align}
is conserved.
\label{th:energy_lemma}
\begin{proof}
The Pauli current \eqref{eq:PD_current} can be rewritten as
\begin{equation}
    J(u,A) = \Im\langle{u},\nabla u\rangle -A|u|^2 + \frac{1}{2}\nabla \times \langle{u},\sigma u\rangle.
\end{equation}
Now $\text{div}\Im \langle{u},\nabla u\rangle =\Im \langle{u},\Delta u\rangle $ and $\text{div} (A|u|^2)  = (\text{div}A)|u|^2  + 2A \cdot \Re\langle{u},\nabla u\rangle $.
From this and $\text{div }\text{curl} =0 $ we obtain 
\begin{align*}
    \text{div} J &= \Re(-i \langle{u}, \Delta u\rangle) - 2A \cdot \Re \langle{u},\nabla u\rangle - (\text{div}A)|u|^2 \\
    &= \Re\langle u, -i\Delta u - 2A\cdot \nabla u -(\text{div}A)u\rangle \\
    &= -2\Re\langle u , \partial_t u\rangle + \Re \langle u , -2 iVu + i |A|^2u + i (\sigma \cdot B) u \rangle
\end{align*}
The last term equals zero (note that $\langle u,(\sigma \cdot B) u\rangle \in \mathbb{R}$), yielding the continuity equation. Integrating the continuity equation in space gives the conservation of charge.

\medskip

To prove the conservation of energy for the Pauli-Poisswell equation, 
we use the equation $\partial_t u +iVu = (i/2)(\sigma \cdot \nabla_A)^2u$ and the identity $J = \Im \langle \sigma u, \sigma \cdot \nabla_A u \rangle$ to obtain

\begin{align}
         \frac{d}{dt} \int  |(\sigma \cdot \nabla_A) u|^2\dd x =& 2\Re \int \langle (\partial_t+iV)(\sigma \cdot \nabla_{A})u, (\sigma \cdot \nabla_{A}) u \rangle \dd x, \nonumber \\
          =& 2\Re \int \langle (\sigma \cdot \nabla_{A})(\partial_t+iV)u, (\sigma \cdot \nabla_{A}) u \rangle \dd x \\
         & \qquad + 2\Re \int \langle i(\sigma \cdot (-\nabla V-\partial_t A))u, (\sigma \cdot  \nabla_{A})  u \rangle \dd x, \nonumber\\ 
         =& \Re i \int \langle {}(\sigma \cdot \nabla_A)^2 u,(\sigma \cdot \nabla_A)^2 u \rangle \dd x\nonumber\\
         & \qquad +2\Im \int  (-\nabla V-\partial_t A)\cdot \langle \sigma u, (\sigma \cdot \nabla_A) u\rangle \dd x \\
=& -2\int \nabla V \cdot J \dd x - 2\int \partial_t A \cdot J \dd x 
\end{align}
We conclude by noting that
\begin{equation*}
    -2\int \nabla V \cdot J \dd x = 2\int V \text{div}J \dd x = -2\int V \partial_t |u|^2 \dd x  = 2\int V \partial_t \Delta V\dd x  = -  \frac{d}{dt} \int |\nabla V|^2 \dd x ,
\end{equation*}
where we used the continuity equation and 
\begin{equation}
    -2\int \partial_t A \cdot J \dd x = 2\int \partial_t A \cdot \Delta A\dd x  = - \frac{d}{dt} \int |\nabla A|^2 \dd x
\label{eq:energy pauli poisswell}
\end{equation}
where we used the Poisson equations $-\Delta V = |u|^2$ and $-\Delta A = -J$ in the second step.

For the Pauli-Darwin equation, $J$ has to be replaced by the transversal current $J-\partial_t \nabla V$ in the above equation. But due to the Coulomb gauge, which holds for the Pauli-Darwin equation,
\begin{equation}
    \int \partial_t A \cdot \partial_t \nabla V \dd x = \int \partial_t (\nabla \cdot A) \partial_t V \dd x = 0.
    \label{eq:energy pauli darwin}
\end{equation}
\end{proof}
\end{lemma}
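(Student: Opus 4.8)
The plan is to prove the three assertions in sequence: the pointwise continuity equation, then conservation of charge as an immediate consequence, and finally conservation of energy, which carries the bulk of the work. Throughout, smoothness and rapid spatial decay justify all integrations by parts and the interchange of $\partial_t$ with spatial operators.

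For the continuity equation I would start by rewriting the Pauli current \eqref{eq:PD_current} in the expanded form $J(u,A) = \Im\langle u,\nabla u\rangle - A|u|^2 + \tfrac12\nabla\times\langle u,\sigma u\rangle$, which comes directly from unfolding $\nabla_A = \nabla - iA$. Taking the divergence, the curl term disappears by $\operatorname{div}\operatorname{curl} = 0$, while $\operatorname{div}\Im\langle u,\nabla u\rangle = \Im\langle u,\Delta u\rangle$ and $\operatorname{div}(A|u|^2) = (\operatorname{div}A)|u|^2 + 2A\cdot\Re\langle u,\nabla u\rangle$, so that $\operatorname{div}J = \Re\langle u,\, -i\Delta u - 2A\cdot\nabla u - (\operatorname{div}A)u\rangle$. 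I would then substitute the Pauli equation \eqref{eq:PD_Pauli} to express $-\tfrac12\Delta_A u$ in terms of $i\partial_t u$, $Vu$ and $(\sigma\cdot B)u$; the point is that the potential terms contribute multiples of $\langle u,iu\rangle$, whose real part vanishes, and that $\sigma\cdot B$ is Hermitian so $\langle u,(\sigma\cdot B)u\rangle$ is real and also drops from the real part. What survives is exactly $\operatorname{div}J = -2\Re\langle u,\partial_t u\rangle = -\partial_t|u|^2$. Integrating over $\mathbb{R}^3$ and using spatial decay to kill the divergence yields $\tfrac{d}{dt}\|u(t)\|_{L^2}^2 = 0$.

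For energy conservation I would differentiate the three summands of $E(t)$ separately, using the Pauli equation in the form $(\partial_t + iV)u = \tfrac{i}{2}(\sigma\cdot\nabla_A)^2 u$ and the identity $J = \Im\langle\sigma u,(\sigma\cdot\nabla_A)u\rangle$ from Section \ref{sec:useful_identities}. For the leading term one has $\tfrac{d}{dt}\|(\sigma\cdot\nabla_A)u\|_{L^2}^2 = 2\Re\int\langle(\partial_t+iV)(\sigma\cdot\nabla_A)u,(\sigma\cdot\nabla_A)u\rangle\,\dd x$. I would then commute $\partial_t + iV$ past $\sigma\cdot\nabla_A$, which produces the term $i\,\sigma\cdot(-\nabla V - \partial_t A)u$ (from $[\partial_t,\sigma\cdot\nabla_A] = -i\sigma\cdot\partial_t A$ and $[iV,\sigma\cdot\nabla_A] = -i\sigma\cdot\nabla V$). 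The remaining ``diagonal'' piece, after using that $\sigma\cdot\nabla_A$ is $L^2$-antisymmetric (hence $(\sigma\cdot\nabla_A)^2$ is symmetric) together with $(\partial_t+iV)u=\tfrac{i}{2}(\sigma\cdot\nabla_A)^2u$, reduces to a purely imaginary multiple of the real quantity $\|(\sigma\cdot\nabla_A)^2 u\|_{L^2}^2$, whose real part is zero. The commutator piece becomes $2\Im\int(-\nabla V - \partial_t A)\cdot\langle\sigma u,(\sigma\cdot\nabla_A)u\rangle\,\dd x = -2\int\nabla V\cdot J\,\dd x - 2\int\partial_t A\cdot J\,\dd x$. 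The $V$-integral, after integrating by parts, invoking the continuity equation, and using $-\Delta V = |u|^2$, equals $2\int V\,\partial_t\Delta V\,\dd x = -\tfrac{d}{dt}\int|\nabla V|^2\,\dd x$.

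The only model-dependent step is the $A$-integral $-2\int\partial_t A\cdot J\,\dd x$. For Pauli-Poisswell one applies $-\Delta A = J$ directly to obtain $-2\int\partial_t A\cdot J\,\dd x = 2\int\partial_t A\cdot\Delta A\,\dd x = -\tfrac{d}{dt}\int|\nabla A|^2\,\dd x$. For Pauli-Darwin the current must be replaced by the transversal current $J - \partial_t\nabla V$, which is the actual source of $A$ in \eqref{eq:PD_PoissonA}, and one then checks that the extra contribution $\int\partial_t A\cdot\partial_t\nabla V\,\dd x = -\int(\operatorname{div}\partial_t A)\,\partial_t V\,\dd x = 0$ by the Coulomb gauge $\operatorname{div}A = 0$. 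Summing the three derivatives gives $E'(t) = 0$. I expect the main obstacle to be purely organizational rather than analytic: correctly tracking the two commutators, confirming that the antisymmetry of $\sigma\cdot\nabla_A$ genuinely annihilates the top-order term, and attaching the right gauge condition and the right Poisson equation for $A$ to the right model.
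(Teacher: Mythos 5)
Your proposal is correct and follows essentially the same route as the paper's proof: the same expansion of $J$ and real-part cancellations for the continuity equation, and the same commutation of $\partial_t+iV$ past $\sigma\cdot\nabla_A$ combined with the identity $J=\Im\langle\sigma u,(\sigma\cdot\nabla_A)u\rangle$ and the Poisson equations (plus the Coulomb gauge for Pauli--Darwin) for the energy. No gaps to report.
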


\subsection{Parabolic regularization and dissipation of energy}
The existence of global weak solutions is proved by regularizing the Pauli-Darwin and Poisswell equations in a way that is compatible with the energy structure: the energy is not conserved anymore, but it is decreasing with time. We define the regularized equations as
\begin{align}\label{eq:new regularization}
\partial_{t} u &= - (i+\epsilon) H u  + \epsilon \frac{(u,Hu)_{L^2}}{\|u_0\|_{L^2}^2} u, \\
\quad u(0,x) &=u_0(x),
\end{align}
for $\epsilon>0$ with 
\begin{equation}
    \label{hamiltonian}
H = -\frac{1}{2}(\sigma \cdot \nabla_{A})^2 + V
\end{equation}
and $-\Delta V = |u|^{2}$. For the Pauli-Darwin equation the magnetic field is given by
\begin{equation}
    -\Delta A = J- \partial_t \nabla V,  \quad \nabla \cdot A = 0,
\end{equation}
with the transversal current $J- \partial_t \nabla V$ and Coulomb gauge. For the Pauli-Poisswell equation it is given by
\begin{equation}
    -\Delta A = J,  \quad \nabla \cdot A +\partial_t V = 0,
\end{equation}
with the full current $J$ and the Lorenz gauge. Notice that
\begin{align}
(u,Hu)_{L^{2}} = \int \langle u , \left[ -\frac{1}{2}(\sigma \cdot \nabla_A)^2u +V \right] u\rangle \dd x &= \frac{1}{2}\int |(\sigma \cdot \nabla_A)u|^2 \dd x + \int V |u|^2 \dd x \nonumber \\
&= \frac{1}{2}\|(\sigma \cdot \nabla_A)u\|_{L^2}^2 + \| \nabla V\|_{L^2}^2 \label{eq: regularizing term new}
\end{align}
where we used $-\Delta V = |u|^2$ and integration by parts. The following lemma ensures that the energy 
\begin{equation}
    E(t) = \|(\sigma \cdot \nabla_A)u(t)\|_{L^2}^2 + \|\nabla V(t)\|^2_{L^2} + \|\nabla A(t)\|^2_{L^2}
\end{equation}
of the regularized system decays. 
\begin{lemma}
\label{thm:decay lemma}Let $u$ be a smooth and rapidly decaying solution of the regularized equation \eqref{eq:new regularization} with initial data $u(0,x) = u_0(x)$. Then $\|u(t)\|_{L^2}=\|u_0\|_{L^2}$ for all $t$ and the energy $E(t)$ satisfies
\begin{equation}
    \frac{\dd}{\dd t} E(t) = - 4\epsilon (\|Hu(t)\|_{L^2}^2 - \frac{(u(t),Hu(t))^2}{\|u(t)\|_{L^2}^2}) \leq 0
\end{equation}
for $\epsilon > 0$. 
\end{lemma}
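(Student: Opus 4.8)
The plan is to derive the statement from two facts: the regularised flow preserves the $L^2$ norm, and it obeys the \emph{off-shell} energy identity $\frac{\dd}{\dd t}E = 4\,\Re\,(Hu,\partial_t u)_{L^2}$; the lemma then follows by inserting the equation and applying Cauchy--Schwarz. For the charge I pair \eqref{eq:new regularization} with $u$ in $L^2$, take real parts, and use that $(u,Hu)_{L^2}$ is real (symmetry of $(\sigma\cdot\nabla_A)^2$ and reality of $V$), which gives $\frac{\dd}{\dd t}\|u\|_{L^2}^2 = 2\epsilon\,(u,Hu)_{L^2}\big(\|u\|_{L^2}^2/\|u_0\|_{L^2}^2-1\big)$. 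With $y(t)=\|u(t)\|_{L^2}^2$ this is the scalar linear ODE $y'=a(t)\,(y-\|u_0\|_{L^2}^2)$, $y(0)=\|u_0\|_{L^2}^2$, with $a(t)=2\epsilon(u(t),Hu(t))_{L^2}/\|u_0\|_{L^2}^2$ continuous by the smoothness and decay hypotheses; Grönwall forces $y\equiv\|u_0\|_{L^2}^2$. Hence the normalising factor in \eqref{eq:new regularization} equals the real number $\lambda(t):=(u(t),Hu(t))_{L^2}/\|u(t)\|_{L^2}^2$, and the equation reads $\partial_t u = -iHu - \epsilon(H-\lambda)u$.

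Next I would prove the off-shell identity: for \emph{every} smooth, rapidly decaying curve $t\mapsto u(t)$, with $V$ and $A$ the associated potentials (the elliptic equations and the chosen gauge — for Pauli--Darwin, $A=(-\Delta)^{-1}\mathbb P J$, so that $\nabla\cdot A=0$),
\begin{equation*}
\frac{\dd}{\dd t}E(t) = 4\,\Re\,(Hu,\partial_t u)_{L^2},
\end{equation*}
which for $\partial_t u = -iHu$ reduces to Lemma~\ref{th:energy_lemma}. I differentiate the three summands of $E$ and assemble them as in the proof of Lemma~\ref{th:energy_lemma}. From $-\Delta V=|u|^2$ one has $\|\nabla V\|_{L^2}^2=(u,Vu)_{L^2}$ and $(u,(\partial_t V)u)_{L^2}=\tfrac12\frac{\dd}{\dd t}\|\nabla V\|_{L^2}^2$, so $\frac{\dd}{\dd t}\|\nabla V\|_{L^2}^2 = 4\,\Re\,(Vu,\partial_t u)_{L^2}$. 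From $-\Delta A=\mathbb P J$ (respectively $-\Delta A=J$), using $\nabla\cdot\partial_t A=0$ (respectively nothing), $\frac{\dd}{\dd t}\|\nabla A\|_{L^2}^2 = 2\int\partial_t A\cdot J\,\dd x$, which cancels the term $-2\int\partial_t A\cdot J\,\dd x$ that the commutator $[\partial_t+iV,\sigma\cdot\nabla_A]=i\,\sigma\cdot(-\nabla V-\partial_t A)$ (with $J=\Im\langle\sigma u,(\sigma\cdot\nabla_A)u\rangle$) produces inside $\frac{\dd}{\dd t}\|(\sigma\cdot\nabla_A)u\|_{L^2}^2$. What remains of $\frac{\dd}{\dd t}\|(\sigma\cdot\nabla_A)u\|_{L^2}^2$ is $2\,\Re\,\big((\sigma\cdot\nabla_A)(\partial_t+iV)u,(\sigma\cdot\nabla_A)u\big)_{L^2}-2\int\nabla V\cdot J\,\dd x$; the anti-symmetry of $\sigma\cdot\nabla_A$ on $L^2$ and $(\sigma\cdot\nabla_A)^2=2(V-H)$ turn the first term into $4\,\Re\,\big((H-V)u,\partial_t u\big)_{L^2}+4\,\Im\,(Vu,Hu)_{L^2}$, while the purely spatial identity $\operatorname{div}J = -2\,\Im\,\langle u,Hu\rangle$ (a by-product of the $\operatorname{div}J$ calculation in the proof of Lemma~\ref{th:energy_lemma}, read off before the equation is substituted) gives $-2\int\nabla V\cdot J\,\dd x = 2\int V\operatorname{div}J\,\dd x = -4\,\Im\,(Vu,Hu)_{L^2}$. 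Summing the three contributions, the two $\Im(Vu,Hu)_{L^2}$ terms cancel and $\frac{\dd}{\dd t}\|\nabla V\|_{L^2}^2$ supplies exactly the ``$+V$'' that promotes $(H-V)u$ to $Hu$, which is the claim.

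Finally, inserting $\partial_t u=-iHu-\epsilon(H-\lambda)u$ into the identity and using $\Re(Hu,-iHu)_{L^2}=0$ together with $(Hu,u)_{L^2}=(u,Hu)_{L^2}$,
\begin{equation*}
\frac{\dd}{\dd t}E(t) = -4\epsilon\big(\|Hu\|_{L^2}^2-\lambda\,(u,Hu)_{L^2}\big) = -4\epsilon\Big(\|Hu(t)\|_{L^2}^2-\frac{(u(t),Hu(t))_{L^2}^2}{\|u(t)\|_{L^2}^2}\Big),
\end{equation*}
the second equality by the charge step, and this is $\le 0$ since $(u,Hu)_{L^2}^2=|(u,Hu)_{L^2}|^2\le\|u\|_{L^2}^2\|Hu\|_{L^2}^2$ by Cauchy--Schwarz. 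The step that requires care is the off-shell identity — where the full coupling of the system enters — and in particular the bookkeeping ensuring that the two $\Im(Vu,Hu)_{L^2}$ contributions cancel (one from the $iV$-shift inside $\big((\sigma\cdot\nabla_A)(\partial_t+iV)u,(\sigma\cdot\nabla_A)u\big)_{L^2}$, the other from $-2\int\nabla V\cdot J\,\dd x$ via $\operatorname{div}J=-2\Im\langle u,Hu\rangle$) and that the two $\partial_t A\cdot J$ contributions cancel; the rest is routine.
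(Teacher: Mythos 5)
Your proposal is correct and follows essentially the same route as the paper: both reduce $\frac{d}{dt}E(t)$ to the off-shell identity $4\Re\,(Hu,\partial_t u)_{L^2}$ by cancelling the $\int \partial_t A\cdot J\,dx$ contributions (using the Coulomb gauge in the Darwin case), then insert the regularized equation and conclude by Cauchy--Schwarz. The only differences are cosmetic and harmless: you obtain charge conservation by solving the linear ODE for $\|u(t)\|_{L^2}^2-\|u_0\|_{L^2}^2$ instead of the paper's ``conserve-then-identify-the-denominator'' argument, and you differentiate the kinetic term through $\partial_t+iV$ as in the conservation lemma --- which forces you to introduce and then cancel two $\Im\,(Vu,Hu)_{L^2}$ terms via the pointwise identity $\operatorname{div}J=-2\Im\langle u,Hu\rangle$ --- whereas the paper differentiates $(\sigma\cdot\nabla_A)u$ directly and gets $4\Re(-\tfrac12(\sigma\cdot\nabla_A)^2u+Vu,\partial_t u)_{L^2}$ without that detour.
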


\begin{proof} Take the equation
\begin{align}\label{eq:new regularization'}
\partial_{t} u &= - (i+\epsilon) H u  + \epsilon \frac{(u,Hu)_{L^2}}{\|u(t)\|_{L^2}^2} u, \\
\quad u(0,x) &=u_0(x),
\end{align} and taking the time derivative of $\|u(t)\|_{L^2}^2$,
\begin{align*}
\frac{\dd}{\dd t}\|u\|^2_{L^2} &= 2 \Re (u,\partial_t u ) = 2 \Re (u,- (i+\epsilon) H u  + \epsilon \frac{(u,Hu)}{\|u\|_{L^2}^2} u) \\
&= -2 \epsilon \Re(u, Hu) + 2\epsilon \frac{(u,u)}{\|u\|_{L^2}^2}\Re(u,Hu) \\
&=0
\end{align*}
Therefore we can take $\|u_0\|_2^2$ instead of $\|u(t)\|_2^2$ in the denominator in \eqref{eq:new regularization'}. Turning to the energy, we obtain, using $-\Delta A = J$ for the Pauli-Poisswell equation,
\begin{align}
\frac{\dd}{\dd t} E(t) =& \frac{\dd}{ \dd t} \left( \|(\sigma \cdot \nabla_A)u\|_{L^2}^2 + \|\nabla V\|^2_{L^2} + \|\nabla A\|^2_{L^2}\right)  \nonumber\\
=& 2\Re((\sigma \cdot \nabla_A)u,(\sigma \cdot \nabla_A)\partial_t u) + 2\Re((\sigma \cdot \nabla_A)u,-i(\sigma \cdot \partial_t A) u) \nonumber\\
& \qquad \qquad + 4\Re ( Vu, \partial_t u) +  2\int \partial_t A \cdot J \\
=& 4\Re(-\frac{1}{2}(\sigma \cdot \nabla_A)^2u +Vu,\partial_t u) + 2\Re((\sigma \cdot \nabla_A)u,-i(\sigma \cdot \partial_t A) u) +  2\int \partial_t A \cdot J \nonumber\\
=& 4\Re(Hu,\partial_t u)+ 2\Re((\sigma \cdot \nabla_A)u,-i(\sigma \cdot \partial_t A) u) +  2\int \partial_t A \cdot J .\label{eq:regularized energy 1}
\end{align}
We claim that the last two terms cancel. Indeed,
\begin{align*}
\partial_t A \cdot J= \Im \langle (\sigma \cdot\partial_t A) u , (\sigma \cdot \nabla_A)u \rangle = -\Re  \langle (\sigma \cdot \nabla_A)u  , -i(\sigma \cdot\partial_t A) u \rangle 
\end{align*}
For the Pauli-Darwin equation, $-\Delta A = J- \partial_t \nabla V$ and $\nabla \cdot A = 0$ and one can proceed as in \eqref{eq:energy pauli poisswell}-\eqref{eq:energy pauli darwin},{i.e.
\begin{equation}
    \frac{d}{dt}\int|\nabla A|^2 = -2\int  \partial_t A\cdot \Delta A = 2\int \partial_t A \cdot (J-\partial_t \nabla V) = 2\int \partial_t A \cdot J + \int \partial_t (\nabla \cdot A)\partial_t V = 0
\end{equation}
}

The first term in \eqref{eq:regularized energy 1} becomes
\begin{align*}
4\Re(Hu,\partial_t u) &= 4\Re(Hu,- (i+\epsilon) H u  + \epsilon \frac{(u,Hu)}{\|u\|_{L^2}^2} u)  \\ &= -4\epsilon\Re(Hu, H u)  + 4\epsilon\Re (Hu, \frac{(u,Hu)}{\|u\|_{L^2}^2} u) \\
&=- 4\epsilon (\|Hu\|_{L^2}^2 - \frac{(u,Hu)^2}{\|u\|_{L^2}^2})
\end{align*}
Thus,
\begin{align*}
\frac{\dd E}{\dd t}  = - 4\epsilon \left(\|Hu\|_{L^2}^2 - \frac{(u,Hu)^2}{\|u\|_{L^2}^2}\right) \leq 0
\end{align*}
by the Cauchy-Schwarz inequality.
\end{proof}

\begin{remark}
    The regularization \eqref{eq:new regularization} was used in \cite{kieffer2020time}. In fact, the regularization in \cite{guo1995global} cannot be used in a straightforward manner, as the the additional spin term $\sigma \cdot B$ does not allow a simple argument for the decay of the energy as in \cite{guo1995global}.
\end{remark}

\section{Elliptic estimates} 
\label{Elliptic estimates}

In this section, we prove estimates on the solutions of the Poisson equations satisfied by the potentials $A,V$.

\begin{lemma}
\label{lemma V}
Let $V=(-\Delta)^{-1}|u|^2$ and $s\geq 1$. Then,
\begin{align*}
& \| V \|_{H^s} \lesssim \| u \|_{H^s}^2 \\
& \|V u\|_{H^s} \lesssim \|u\|_{H^1}^2 \|u\|_{H^s}
\end{align*}
\end{lemma}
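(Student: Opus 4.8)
The plan is to treat the two estimates separately, both relying on standard elliptic regularity for $(-\Delta)^{-1}$ combined with product/Sobolev estimates in $\mathbb{R}^3$. For the first estimate, I would start from $V = (-\Delta)^{-1}|u|^2$, so that $D^s V = D^{s-2}(|u|^2)$ up to constants. The task is thus to bound $\|D^{s-2}(|u|^2)\|_{L^2} + \||u|^2\|_{L^2}$ (the low-frequency part coming from the inhomogeneous norm $\langle D\rangle^s$) by $\|u\|_{H^s}^2$. Since $|u|^2 = \langle u, u\rangle$ is quadratic in $u$, I would split into the regime $1 \le s \le 2$ and $s \ge 2$. For $s \le 2$ one has $s-2 \le 0$, so $\|D^{s-2}(|u|^2)\|_{L^2} \lesssim \||u|^2\|_{L^p}$ for an appropriate $p \le 2$ by Hardy--Littlewood--Sobolev (the homogeneous Riesz potential $D^{s-2}$ with negative order maps $L^p \to L^2$ when $\frac1p - \frac12 = \frac{2-s}{3}$), and then $\||u|^2\|_{L^p} = \|u\|_{L^{2p}}^2 \lesssim \|u\|_{H^s}^2$ by Sobolev embedding, valid since $s > \frac32 - \frac3{2p} = \frac32 - \frac{3}{4} \cdot \frac{?}{}$ — more simply, $H^s(\mathbb{R}^3) \hookrightarrow L^{2p}$ needs $s \ge \frac32 - \frac3{2p}$, which holds throughout $1\le s\le 2$ after checking the endpoint exponents. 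For $s \ge 2$, $D^{s-2}$ has nonnegative order and I would instead use the fractional Leibniz (Kato--Ponce) inequality $\|D^{s-2}(uv)\|_{L^2} \lesssim \|D^{s-2}u\|_{L^{p_1}}\|v\|_{L^{q_1}} + \|u\|_{L^{p_2}}\|D^{s-2}v\|_{L^{q_2}}$, then absorb everything into $\|u\|_{H^s}$ using $H^s \hookrightarrow W^{s-2,p}$ and $H^s \hookrightarrow L^q$ for the relevant exponents; since $s > 3/2$, $u \in L^\infty$, which makes the bookkeeping easy. The low-frequency piece $\||u|^2\|_{L^2} = \|u\|_{L^4}^2 \lesssim \|u\|_{H^s}^2$ is immediate from $H^s \hookrightarrow L^4$ for $s \ge 3/4$.

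For the second estimate, $\|Vu\|_{H^s} \lesssim \|u\|_{H^1}^2\|u\|_{H^s}$, I would again use a fractional Leibniz rule:
\begin{equation*}
\|Vu\|_{H^s} \lesssim \|V\|_{L^\infty}\|u\|_{H^s} + \|V\|_{\dot W^{s,p}}\|u\|_{L^q} + \|V\|_{H^s}\|u\|_{L^\infty},
\end{equation*}
with exponents chosen so that $H^1(\mathbb{R}^3) \hookrightarrow L^q$ and the intermediate term is controlled. The key point is that $V$ must be estimated using only $\|u\|_{H^1}$ (two copies) except for one factor where $\|u\|_{H^s}$ is allowed. So: $\|V\|_{L^\infty} \lesssim \|u\|_{H^1}^2$ follows from $V = (-\Delta)^{-1}|u|^2$ and the Hardy--Littlewood--Sobolev / Sobolev chain $\|V\|_{L^\infty} \lesssim \||u|^2\|_{L^{3/2-}\cap L^{3/2+}} \lesssim \|u\|_{L^{3-}}^2 + \|u\|_{L^{3+}}^2 \lesssim \|u\|_{H^1}^2$ (using $H^1 \hookrightarrow L^r$ for $2 \le r \le 6$); more cleanly, $\|\nabla V\|_{L^2} \lesssim \|u\|_{L^{12/5}}^2 \lesssim \|u\|_{H^1}^2$ and $\|V\|_{\dot H^2} = \||u|^2\|_{L^2} = \|u\|_{L^4}^2 \lesssim \|u\|_{H^1}^2$, so by interpolation and Sobolev $V \in L^\infty$ with the stated bound. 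The middle term $\|V\|_{\dot W^{s,p}}\|u\|_{L^q}$: take $q$ large (so $\|u\|_{L^q} \lesssim \|u\|_{H^s}$ if $s$ large enough, or rather put the $H^s$ on $V$) — I would instead route it as $\|D^s V\|_{L^2}\|u\|_{L^\infty}$ plus a term $\|D^\theta V\|_{L^{p_1}}\|D^{s-\theta}u\|_{L^{p_2}}$ and then use $\|D^sV\|_{L^2} = \|D^{s-2}|u|^2\|_{L^2}$, which by the first estimate's method is $\lesssim \|u\|_{H^1}\|u\|_{H^s}$ (only one derivative-heavy factor needed since $|u|^2$ is a product of two $u$'s and we distribute $s-2 \le s$ derivatives onto one of them). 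That last observation — that in $\|D^{s-2}(u\bar u)\|_{L^2}$ one factor can be kept in $L^\infty \cap L^4 \subset$ (controlled by $H^1$) — is what produces the asymmetric $\|u\|_{H^1}^2\|u\|_{H^s}$ structure.

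The main obstacle I anticipate is purely bookkeeping: choosing Hölder/Sobolev exponents that are simultaneously admissible in $\mathbb{R}^3$ across the full range $s \ge 1$, particularly at the transition $s = 2$ where the order of $D^{s-2}$ changes sign, and making sure the HLS condition $\frac1p - \frac1q = \frac{\alpha}{3}$ for $D^{-\alpha}: L^p \to L^q$ is met with $p > 1$ (strict) — the endpoint $p = 1$ is forbidden, so at $s = 2$ one uses $\||u|^2\|_{L^2}$ directly rather than HLS, and for $s$ slightly above or below one stays safely in the open range. There is no deep difficulty: the quadratic structure of $|u|^2$ and the gain of two derivatives from $(-\Delta)^{-1}$ give plenty of room, and $s > 3/2 \Rightarrow H^s \hookrightarrow L^\infty$ kills most borderline cases; one just has to be careful that the second estimate only costs $\|u\|_{H^1}^2$ on the "potential" side, which forces the derivative count to land on the single explicit factor $u$ multiplying $V$ and on at most one of the two $u$'s inside $V$.
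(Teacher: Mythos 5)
Your overall strategy coincides with the paper's: both estimates are reduced to mapping properties of the Riesz potential $(-\Delta)^{-1}$ applied to the quadratic density, the product $Vu$ is handled by the fractional Leibniz rule with a case split at $s=2$ for the order of $D^{s-2}$, and your bound $\|V\|_{L^\infty}\lesssim \||u|^2\|_{L^{3/2-}}+\||u|^2\|_{L^{3/2+}}\lesssim \|u\|_{H^1}^2$ is exactly the one used in the paper. Your plan for the first inequality is sound as stated.

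The gap is in the second inequality, in the Leibniz term where the $s$ derivatives fall on $V$. You route this contribution as $\|V\|_{H^s}\|u\|_{L^\infty}$ (equivalently $\|D^sV\|_{L^2}\|u\|_{L^\infty}$). This fails in the range $1\le s\le 3/2$, which is included in the hypothesis $s\ge 1$ and is precisely the case needed for the $H^1$ weak-solution argument: there $H^s(\mathbb{R}^3)\not\hookrightarrow L^\infty$, so $\|u\|_{L^\infty}$ is not controlled by any norm on the right-hand side. Even for $s>3/2$, where $\|u\|_{L^\infty}\lesssim\|u\|_{H^s}$, your chain produces $\|u\|_{H^1}\|u\|_{H^s}\cdot\|u\|_{H^s}$, i.e.\ two factors of $\|u\|_{H^s}$, rather than the asserted asymmetric bound $\|u\|_{H^1}^2\|u\|_{H^s}$. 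The repair is the exponent choice the paper makes: pair the derivative-heavy factor with $L^3$ instead of $L^\infty$, namely estimate this term by $\|\langle D\rangle^s V\|_{L^6}\,\|u\|_{L^3}$ with $\|u\|_{L^3}\lesssim\|u\|_{H^1}$; then split $\|\langle D\rangle^s V\|_{L^6}\lesssim \|V\|_{L^6}+\|\langle D\rangle^{s-2}|u|^2\|_{L^6}$, bound $\|V\|_{L^6}\lesssim \||u|^2\|_{L^{6/5}}\lesssim\|u\|_{H^1}^2$, and treat $\|\langle D\rangle^{s-2}|u|^2\|_{L^6}$ by your two cases ($s\le 2$ directly, $s\ge 2$ by Leibniz keeping one factor measured only in $H^1$). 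With that substitution your argument closes for all $s\ge 1$ and yields the stated bound.
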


\begin{proof}To prove the first assertion, we write
$$
\| \Delta^{-1} |u|^2 \|_{H^s} \lesssim \| \Delta^{-1} |u|^2 \|_{L^2} + \| |D|^{s-2} |u|^2 \|_{L^2} \lesssim \| |u|^2 \|
$$

By the fractional Leibniz rule \cite{grafakos2014kato}, 
\begin{equation*}
    \|((-\Delta)^{-1}|u|^2)u\|_{H^s} \lesssim \|(-\Delta)^{-1}|u|^2\|_{L^\infty} \|u\|_{H^s} + \|\langle D \rangle^s (-\Delta)^{-1}|u|^2\|_{L^6} \|u\|_{L^3}.
\end{equation*}

We now examine the terms on the above right-hand. On the one hand, by classical estimates on the Riesz potential and the Sobolev embedding theorem,
\begin{equation*}
\| (-\Delta)^{-1}|u|^2\|_{L^\infty} \lesssim \| |u|^2 \|_{L^{3/2+}} + \| |u|^2 \|_{L^{3/2-}} = \| u \|_{L^{3+}}^2 + \| u \|_{L^{3-}}^2 \lesssim \| u \|_{H^1}^2.
\end{equation*}

On the other hand, by splitting between high and low frequencies, 
\begin{equation*}
\|\langle D \rangle^s (-\Delta)^{-1}|u|^2\|_{L^6} 
\lesssim \| (-\Delta)^{-1}|u|^2\|_{L^6} + \| \langle D \rangle^{s-2} |u|^2\|_{L^6}.
\end{equation*}
The first term on the right-hand side can be bounded with the help of H\"older's inequality and the Sobolev embedding theorem as follows
$$
\| (-\Delta)^{-1}|u|^2\|_{L^6} \lesssim \| |u|^2 \|_{L^{6/5}} \lesssim \| u \|_{L^{12/5}}^2 \lesssim \| u \|_{H^1}^2.
$$
For the second term on the right-hand side, we need to distinguish between two cases: if $1 \leq s \leq 2$,
$$
 \| \langle D \rangle^{s-2} |u|^2\|_{L^6} \lesssim \| |u|^2\|_{L^6} \lesssim \| u \|_{L^3}^2 \lesssim \| u \|_{H^1}^2.
$$
If $s \geq 2$, the same arguments as above give
$$
\| \langle D \rangle^{s-2} |u|^2\|_{L^6} \lesssim \| \langle D \rangle^{s-2} u \|_{L^{\infty}} \| u \|_{L^{6}} \lesssim \| \langle D \rangle^{s} u \|_{L^2} \| u \|_{L^6} \lesssim \| u \|_{H^1} \| u \|_{H^s}.
$$
Gathering the above estimates gives the desired result.
\end{proof}

\begin{lemma}
\label{lem:apriori_magnetic}
\begin{itemize}
\item[(i)] (Existence and uniqueness of a solution) 
Let $u\in H^1$ and consider the boundary value problems on $\mathbb{R}^3$
\begin{align}
    -\Delta A = J,  && |A(x)| \rightarrow \infty \text{ as } |x|\rightarrow 0
\end{align}
or
\begin{align}
 -\Delta A = \mathbb{P}J, && |A(x)| \rightarrow \infty \text{ as } |x|\rightarrow 0
\end{align}
where $\mathbb{P}$ denotes the Leray projection and
\begin{equation}
    J = \Im \langle u, \nabla_A u\rangle +\frac{1}{2} \nabla \times \langle u,\sigma u\rangle.
\end{equation}
These problems have a unique solution $A \in \dot{H}^1$. 

\item[(ii)] ($L^2$ estimates)
Let $A,A'$ be two solutions corresponding to $J,J'$ defined by $J=J(u,A)$ and $J'=(u',A')$. Then $A,A'$ satisfy the estimates
\begin{align}
 \|\nabla A\|_{L^2} &\lesssim \|u\|_{H^1},\label{eq:estimate A H1} \\
\|\nabla(A-A')\|_{L^2} &\lesssim F(\| u \|_{H^1}, \| u' \|_{H^1}) \|u-u'\|_{H^1} \label{eq: estimate nabla A - A'}\\ 
\|\nabla A\|_{{H}^s}  & \lesssim F(\|u\|_{{H}^s}).
        \label{eq:estimate A Hs}
\end{align}
where $s > \frac 32$ and $F$ always stands for a continuous function vanishing at the origin. 

\item[(iii)] ($L^\infty$ estimates) 
If $s>\frac 32$, then
\begin{equation}
  \| A\|_{L^\infty}  + \|\nabla A\|_{L^\infty} \lesssim F(\| u \|_{H^s}) .
    \label{eq:estimate nabla A infty}
\end{equation}
\end{itemize}

\begin{proof}
\noindent (i) The problem $-\Delta A = J$ can be written as
\begin{align}
    LA(x) &= f(x), \quad x \in \mathbb{R}^3, \quad |A(x)| \rightarrow 0 \text{ as }|x|\rightarrow \infty \\
    f(x) &= -\Im \langle u(x), \nabla u(x)\rangle - \frac{1}{2} \nabla \times \langle u(x), \sigma u(x) \rangle
\end{align}
where $L$ is defined as
\begin{equation*}
    L = \Delta - |u|^2.
\end{equation*}

Defining the bilinear form $B$ as
\begin{equation}
    B[A,A'] = \int \nabla A \cdot \nabla A' \dd x + \int A\cdot A'|u|^2 \dd x,
\end{equation}
it is bounded and coercive on $\dot{H}^1(\mathbb{R}^3)$ if we can bound
$$
\int A \cdot A' |u|^2 \dd x \lesssim \| \nabla A \|_{L^2} \| \nabla A' \|_{L^2}, 
$$
which is the case if $u \in L^3 \subset H^1$ by Sobolev embedding. As for the form $(f,\cdot)$, it is bounded on $\dot{H}^1(\mathbb{R}^3)$ if $f \in L^{6/5} \subset \dot{H}^{-1}$, which is also ensured by $u \in H^1$, as follows from H\"older's inequality and the Sobolev embedding theorem.

Therefore, the Lax-Milgram theorem (or the Riesz representation theorem) gives a unique solution $A \in \dot H^1$ if $u \in H^1$.

In the case of Pauli-Darwin ($-\Delta A = \mathbb{P} J$), the operator $L$ needs to be modified to $L = \Delta - \mathbb{P} (|u|^2 \cdot)$, and the function $f$ to $\mathbb{P} f$. However, the same argument as above applies almost verbatim since $\mathbb{P}$ is bounded on $L^p$ spaces and $A$ is restricted to the space of divergence-free fields.

\medskip

\noindent (ii) Using boundedness of $\mathbb{P}$ on Lebesgue spaces, it suffices to deal with the case of Pauli-Poisswell $-\Delta A = J$.
Multiplying both sides of the equation with $A$ and integrating over $\mathbb{R}^3$,
\begin{align*}
   \int_{\mathbb{R}^3} \left[ |A|^2|u|^2 + |\nabla A|^2 \right] \dd x = \int_{\mathbb{R}^3} A\cdot (\Im \langle{u},\nabla u\rangle { +\frac{1}{2}}\nabla \times \langle u, \sigma u\rangle) \dd x.
\end{align*}
From
\begin{align*}
    & \|\nabla A\|_{L^2}^2 + \|Au\|_{L^2}^2 \leq \|Au\|_{L^2} \|\nabla u\|_{L^2}
\end{align*}we deduce
$$
\| \nabla A\|_{L^2} \lesssim \| u \|_{H^1}.
$$
For \eqref{eq: estimate nabla A - A'}, subtract the equations for $A$ and $A'$, multiply by $A-A'$ and integrate over $\mathbb{R}^3$ to obtain
\begin{equation}
\begin{split}
& \int|\nabla(A-A')|^2 \dd x + \int |A-A'|^2|u'|^2 \dd x \\
& \qquad = \int (A-A') \cdot \left[ \Im \langle{u},\nabla u\rangle - \Im \langle{u'},\nabla u'\rangle + \frac 12 \nabla \times \langle u,\sigma u \rangle \right.\\
& \qquad \qquad \qquad \qquad - \left. \frac 12 \nabla \times \langle u',\sigma u' \rangle + A (|u'|^2 - |u|^2) \right] \dd x
\label{eq:estimate difference A - A'}
\end{split}
\end{equation}
Bounding the right-hand side with the help of H\"older's inequality and the Sobolev embedding theorem gives
\begin{align*}
& \int|\nabla(A-A')|^2 \dd x + \int |A-A'|^2|u'|^2 \dd x \\
& \qquad \lesssim \| \nabla(A-A') \|_{L^2} \| u \|_{H^1} \| u-u' \|_{H^1} + \| \nabla(A-A') \|_{L^2} \| \nabla A \|_{L^2}\| u-u' \|_{H^1},
\end{align*}
which leads to the desired inequality.

Turning to \eqref{eq:estimate A Hs} in the case $s>2$, we have by the Poisson equation on $A$
\begin{equation}
\label{estimateA}
\| \nabla A \|_{H^s} = \| \nabla \Delta^{-1} J \|_{H^s} \sim \| D^{-1} J \|_{L^2} + \| D^{s-1} J \|_{L^2}.
\end{equation}
To estimate the first term on the right-hand side, we use successively the Sobolev embedding theorem, the definition of $J$, H\"older's inequality, and once again the Sobolev embedding theorem to obtain that
$$
\| D^{-1} J \|_{L^2} \lesssim \| J \|_{L^{6/5}} \lesssim \| u \|_{L^3} \| \nabla u \|_{L^2} + \| A \|_{L^\infty} \| u \|_{L^2} \| u \|_{L^3} \lesssim F( \| u \|_{H^s}).
$$
For the second term on the right-hand side of \eqref{estimateA}, we use successively the the definition of $J$, the fractional Leibniz rule, H\"older's inequality and the Sobolev embedding theorem to get
\begin{align*}
\| D^{s-1} J \|_{L^2} & \lesssim \| D^{s-1} u \|_{L^6} \| \nabla u \|_{L^3} + \| u \|_{L^\infty} \| D^s u \|_{L^2} \\
& \qquad \qquad + \| A \|_{L^\infty} \| D^{s-1} u \|_{L^2} \| u \|_{L^\infty} + \| D^{s-1} A \|_{L^6} \| u \|_{L^6}^2 \\
& \lesssim F(\| u \|_{H^s}) + \| u \|_{H^s}^2 \| D^{s} A \|_{L^2}
\end{align*}
Overall, we proved that
$$
\| \nabla A \|_{H^s} \lesssim F(\| u \|_{H^s}) + \| u \|_{H^s}^2 \| D^{s} A \|_{L^2}.
$$
To conclude, it suffices to use an interpolation inequality followed by Young's inequality which gives
\begin{align*}
\| u \|_{H^s}^2 \| D^{s} A \|_{L^2} & \lesssim \| u \|_{H^s}^2 \| D A \|_{L^2}^{\frac 1 {s}} \| D^{s+1} A \|_{L^2}  ^{1 - \frac 1 {s}} \\
& \lesssim F(\| u \|_{H^s}) \| \nabla A \|_{H^s}^{1 - \frac 1 {s}}\\
& \lesssim c^{1-s} F(\| u \|_{H^s}) +  c \| \nabla A \|_{H^s}. 
\end{align*}
(where the function $F$ differs in the last two lines above). This means that
$$
\| \nabla A \|_{H^s} \lesssim F(\| u \|_{H^s}) + c^{1-s} F(\| u \|_{H^s}) +  c \| \nabla A \|_{H^s}.
$$
Choosing $c$ sufficiently small gives the desired result.
\medskip

\noindent (iii) 
We only prove the bound on $\| \nabla A \|_{L^\infty}$ since the bound on $\| A \|_{L^\infty}$ is easier. By definition of $A$,
$$
\nabla A = \nabla \Delta^{-1} \left(\Im \langle u, \nabla_A u\rangle + \frac{1}{2} \nabla \times \langle u,\sigma u\rangle \right),
$$
so that, by mapping properties of the Riesz potential,
$$
\| \nabla A \|_{L^\infty} \lesssim \left\| A u^2 \right\|_{L^{3-}} +  \left\| A u^2 \right\|_{L^{3+}} + \left\| u \nabla u \right\|_{L^{3-}} +  \left\| u \nabla u \right\|_{L^{3+}}.
$$
By H\"older's inequality, we can bound
\begin{align*}
\| \nabla A \|_{L^\infty} \lesssim \left\| A \right\|_{L^{6}} \| u \|_{L^{12-}}^2 + \left\| A \right\|_{L^{6}} \| u \|_{L^{12+}}^2 + \| \nabla u \|_{L^{3-}} \| u \|_{L^\infty} + \| \nabla u \|_{L^{3+}} \| u \|_{L^\infty}.
\end{align*}
Using that $\| \nabla A \|_{L^2} \lesssim \| u \|_{H^1}^2$ and the Sobolev embedding theorem, this implies that
$$
\| \nabla A \|_{L^\infty} \lesssim (1+ \|u\|_{H^1}^2) \|u\|^2_{H^{3/2+}} ,
$$
which was the desired estimate.
\end{proof}
\end{lemma}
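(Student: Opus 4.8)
The plan is to exploit the fact that, since $A$ is real-valued, $\Im\langle u,\nabla_A u\rangle = \Im\langle u,\nabla u\rangle - A|u|^2$, so that $-\Delta A = J$ is in reality the \emph{linear} elliptic problem
\[
(-\Delta + |u|^2)\,A = g, \qquad g := \Im\langle u,\nabla u\rangle + \tfrac12\nabla\times\langle u,\sigma u\rangle ,
\]
whose source $g$ depends on $u$ alone (and with $\mathbb P$ inserted throughout in the Pauli–Darwin case). For part (i) I would solve this by Lax–Milgram on $\dot H^1(\mathbb{R}^3)$ — or on its closed subspace of divergence-free fields when $J$ is replaced by $\mathbb{P}J$, using boundedness of $\mathbb{P}$ on $L^p$, $1<p<\infty$. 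Coercivity of $B[A,A']=\int\nabla A\cdot\nabla A' + \int|u|^2 A\cdot A'$ is immediate; boundedness of $B$ needs $|u|^2\in L^{3/2}$, which holds since $u\in H^1\hookrightarrow L^6$; and boundedness of the functional $A\mapsto\int g\cdot A$ on $\dot H^1$ follows from $g\in\dot H^{-1}$, which itself follows from $u\in H^1$ once the curl term is integrated by parts (moving $\nabla\times\langle u,\sigma u\rangle$ against $A$ to $\langle u,\sigma u\rangle$ against $\nabla\times A$).

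For the $L^2$ bounds in (ii) I would test the equation against $A$ itself, obtaining $\|\nabla A\|_{L^2}^2+\|Au\|_{L^2}^2=\int g\cdot A$, and bound the right-hand side by Cauchy–Schwarz and Hölder–Sobolev, absorbing the $\|Au\|_{L^2}$- and $\|\nabla A\|_{L^2}$-factors to get \eqref{eq:estimate A H1}. For the Lipschitz bound \eqref{eq: estimate nabla A - A'} I would subtract the equations for $A$ and $A'$, test against $A-A'$, and control the difference of the sources by telescoping ($\Im\langle u,\nabla u\rangle-\Im\langle u',\nabla u'\rangle = \Im\langle u-u',\nabla u\rangle+\Im\langle u',\nabla(u-u')\rangle$, and similarly for the spin term) together with the extra contribution of $|u|^2-|u'|^2$; Hölder, Sobolev and one more absorption finish it. For the high-regularity bound \eqref{eq:estimate A Hs} I would pass to Fourier side, $\|\nabla A\|_{H^s}\sim\|D^{-1}J\|_{L^2}+\|D^{s-1}J\|_{L^2}$, handle the low-order piece by Sobolev and Hölder, and expand the top-order piece with the fractional Leibniz rule; every resulting term is controlled by $F(\|u\|_{H^s})$ except the magnetic one, which costs a factor $\|D^s A\|_{L^2}$. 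That term I would then interpolate, $\|D^s A\|_{L^2}\lesssim\|DA\|_{L^2}^{1/s}\|D^{s+1}A\|_{L^2}^{1-1/s}\lesssim F(\|u\|_{H^s})\,\|\nabla A\|_{H^s}^{1-1/s}$, and absorb by Young's inequality with a small constant; the range $3/2<s\le 2$ is even easier since $D^{s-2}$ smooths.

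For (iii) I would write $\nabla A=\nabla\Delta^{-1}J$ and invoke the classical fact that in $\mathbb{R}^3$ the order-$(-1)$ Riesz potential maps $L^{3-\varepsilon}\cap L^{3+\varepsilon}$ into $L^\infty$ (split the kernel $|x|^{-2}$ at $|x|=1$: the near part lies in $L^{r}$ for every $r<3/2$ and pairs with the $L^{3+\varepsilon}$ norm, the far part in $L^{r}$ for every $r>3/2$ and pairs with the $L^{3-\varepsilon}$ norm). It then suffices to bound $\|J\|_{L^{3\pm}}$ by Hölder, $\|u\nabla u\|_{L^{3\pm}}\le\|u\|_{L^\infty}\|\nabla u\|_{L^{3\pm}}$ and $\|A|u|^2\|_{L^{3\pm}}\le\|A\|_{L^6}\|u\|_{L^{12\pm}}^2$, all finite for $s>3/2$ by Sobolev embedding and the $L^2$ bound on $\nabla A$ from (ii); the bound on $\|A\|_{L^\infty}$ is the same computation one order lower (the Newtonian potential $\Delta^{-1}$ maps $L^{3/2\pm}$ into $L^\infty$), and needs only $H^1$.

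The step I expect to be the real obstacle is closing \eqref{eq:estimate A Hs}. Since the source $J$ for $A$ itself contains $A$ (through the magnetic term $-A|u|^2$), the equation is not semilinear in the naive sense, and straightforward elliptic regularity leaves a copy of the top norm $\|\nabla A\|_{H^s}$ on the right; everything hinges on the fact that its prefactor can be made arbitrarily small via interpolation plus Young. A secondary subtlety is the near-endpoint nature of the Riesz-potential bound into $L^\infty$ used in (iii), which is exactly why both an $L^{3-}$ and an $L^{3+}$ norm of the current must be carried along.
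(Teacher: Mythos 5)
Your proposal is correct and follows essentially the same route as the paper: Lax--Milgram for $(-\Delta+|u|^2)A=g$ on $\dot H^1$ (restricted to divergence-free fields in the Darwin case), energy testing and telescoping for the $L^2$ and difference bounds, fractional Leibniz plus interpolation and Young's inequality to absorb the $\|D^sA\|_{L^2}$ term in the $H^s$ estimate, and the $L^{3\pm}$ Riesz-potential argument for the $L^\infty$ bounds. You also correctly single out the absorption step in \eqref{eq:estimate A Hs} as the crux, which is exactly how the paper closes that estimate.
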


In order to pass to the weak limit in $H^1$ we need strong convergence of the vector potential $A$, depending on $u$. To this end we establish the following lemma.
\begin{lemma}\label{th:strong_convergence_A} For $\chi$ a cutoff function supported on $B(0,2)$ and equal to $1$ on $B(0,1)$, we denote $\chi_R = \chi \left( \frac \cdot R \right)$. Assume that $u,u' \in H^1$ and $A,A' \in \dot H^1$ solve
\begin{align*}
& -\Delta A = \Im\langle{u}, \nabla_A u\rangle+\frac{1}{2}\nabla \times \langle{u},\sigma u\rangle \\
& -\Delta A' = \Im\langle{u'}, \nabla_{A'} u'\rangle+\frac{1}{2}\nabla \times \langle{u'},\sigma u'\rangle.
\end{align*}
Then
\begin{align*}
\| A - A' \|_{L^6(B(0,R))} & \lesssim  \|  u -u' \|_{L^3 (B(0,20R))}\left[\| u\|_{H^1} + \| u' \|_{H^1} \| \right] \left[ 1 + \| \nabla A' \|_{L^2} \right] \\
& \qquad \qquad + \frac{1}{R^{1/4}}  (\| u \|_{ H^1} + \| u' \|_{ H^1})^2 (1 +  \|\nabla A'\|_{L^2})
\end{align*}
\end{lemma}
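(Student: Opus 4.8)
The plan is to reduce everything to a single elliptic equation for the difference $W := A - A'$. Writing $v := u-u'$ and expanding the current as $J(u,A)=\Im\langle u,\nabla u\rangle - A|u|^2 + \tfrac12\nabla\times\langle u,\sigma u\rangle$, the one delicate point is that subtracting the two Poisson equations produces a term proportional to $W$ on the right-hand side, coming from the identity $A|u|^2 - A'|u'|^2 = W|u|^2 + A'(|u|^2-|u'|^2)$. I would absorb this term into the left-hand side, so that $W$ solves
$$
(-\Delta + |u|^2)\,W = g, \qquad g := \Im\langle v,\nabla u\rangle - \Im\langle \nabla u',v\rangle + \nabla\,\Im\langle u',v\rangle - A'\big(|u|^2-|u'|^2\big) + \tfrac12\,\nabla\times\big(\langle v,\sigma u\rangle + \langle u',\sigma v\rangle\big).
$$
The advantage of this form is threefold: each summand of $g$ carries a factor of $v$ (so the bound vanishes with $v$); the potential $A$ has disappeared, only $A'$ remaining; and the summands split into ``algebraic'' ones, bounded pointwise by $|v|$ times a size quantity, and two that carry a gradient or curl but with no derivative falling on $v$. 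Since $|u|^2\ge 0$ and $u\in H^1$, the operator $L := -\Delta+|u|^2$ is invertible on $\dot H^1$ (Lemma \ref{lem:apriori_magnetic}(i)), positivity-preserving, and its Green's function is dominated by the Newtonian one, $0\le G(x,y)\le (4\pi|x-y|)^{-1}$; hence $|L^{-1}\phi|\le(-\Delta)^{-1}|\phi|$ pointwise, and testing $L\psi = \partial_k h$ with $\psi$ gives the energy bound $\|L^{-1}\partial_k h\|_{L^6}\lesssim\|h\|_{L^2}$. By uniqueness $W = L^{-1}g$, and I would split $W = L^{-1}(\chi_{10R}g) + L^{-1}((1-\chi_{10R})g) =: W_{\mathrm{near}}+W_{\mathrm{far}}$, where $\chi_{10R}$ is the given cutoff, supported in $B(0,20R)$ and equal to $1$ on $B(0,10R)$.

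For the near term I would bound $\|W_{\mathrm{near}}\|_{L^6(\mathbb{R}^3)}$. For the algebraic summands of $g$, use $|W_{\mathrm{near}}|\le(-\Delta)^{-1}(\chi_{10R}|g|)$ together with $(-\Delta)^{-1}\colon L^{6/5}\to L^6$ and H\"older's inequality, obtaining $\|v\|_{L^3(B(0,20R))}$ times $\|\nabla u\|_{L^2}$, $\|\nabla u'\|_{L^2}$, or $\|\nabla A'\|_{L^2}(\|u\|_{H^1}+\|u'\|_{H^1})$ (using $\|A'\|_{L^6}\lesssim\|\nabla A'\|_{L^2}$ and Sobolev). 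For the gradient and curl summands I first commute $\chi_{10R}$ past the derivative — the commutator is a lower-order term supported in the annulus $\{10R<|y|<20R\}$, handled as above — and then invoke $\|L^{-1}\partial_k h\|_{L^6}\lesssim\|h\|_{L^2}$ with $h$ of the form $\chi_{10R}\langle u',v\rangle$, $\chi_{10R}\langle u,\sigma v\rangle$, and so on; this keeps all derivatives off $v$, and $\|h\|_{L^2}\lesssim\|v\|_{L^3(B(0,20R))}(\|u\|_{H^1}+\|u'\|_{H^1})$ by H\"older and Sobolev. Summing, $\|W_{\mathrm{near}}\|_{L^6}\lesssim\|v\|_{L^3(B(0,20R))}(\|u\|_{H^1}+\|u'\|_{H^1})(1+\|\nabla A'\|_{L^2})$, which is the first term of the claim. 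For the far term I would estimate $\|W_{\mathrm{far}}\|_{L^6(B(0,R))}$ exploiting that $(1-\chi_{10R})g$ is supported in $\{|y|>10R\}$: for $x\in B(0,R)$ and $|y|>10R$ one has $|x-y|\ge 9R$, hence $|x-y|^{-1}\le (9R)^{-1/4}|x-y|^{-3/4}$, so that $|W_{\mathrm{far}}(x)|\le(-\Delta)^{-1}|(1-\chi_{10R})g|(x)\lesssim R^{-1/4}\,(D^{-9/4}|g|)(x)$. By the Hardy--Littlewood--Sobolev inequality $D^{-9/4}\colon L^{12/11}\to L^6$, whence $\|W_{\mathrm{far}}\|_{L^6(B(0,R))}\lesssim R^{-1/4}\|g\|_{L^{12/11}(\mathbb{R}^3)}$; since every summand of $g$ carries a factor $v$, bounding crudely $\|v\|_{L^p}\lesssim\|u\|_{H^1}+\|u'\|_{H^1}$ and using $\|A'\|_{L^6}\lesssim\|\nabla A'\|_{L^2}$ gives $\|g\|_{L^{12/11}}\lesssim(\|u\|_{H^1}+\|u'\|_{H^1})^2(1+\|\nabla A'\|_{L^2})$, the second term. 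Adding the near and far bounds yields the lemma.

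The step I expect to be the main obstacle is setting up the equation correctly in the first place: the ``self-referential'' term $W|u|^2$ generated by the magnetic part $A|u|^2$ of the current would, if left on the right-hand side, force $\|A-A'\|_{L^6}$ on a larger ball into the estimate and prevent it from closing; moving it into the operator $-\Delta+|u|^2$ and using that the corresponding resolvent is positivity-preserving and dominated by the Newtonian potential is the device that unblocks the argument. The secondary technical point is the integration by parts (equivalently, commuting the cutoff past the derivative) for the gradient and curl terms of $g$, which is what makes the near-field bound depend on $\|u-u'\|_{L^3(B(0,20R))}$ rather than on $\|\nabla(u-u')\|_{L^2}$.
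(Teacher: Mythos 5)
Your proposal is correct and follows essentially the same route as the paper: you move the $|u|^2(A-A')$ term into the operator $-\Delta+|u|^2$, split the source with $\chi_{10R}$ into near and far parts, handle the near part by an energy/duality estimate that keeps all derivatives off $u-u'$ (the paper does this by integrating by parts in the energy identity, you by commuting the cutoff and using $\|L^{-1}\partial_k h\|_{L^6}\lesssim\|h\|_{L^2}$, which is the same bound), and handle the far part via the maximum-principle comparison $0\le K(x,y)\lesssim|x-y|^{-1}$, the $R^{-1/4}$ gain from $|x-y|\gtrsim R$, and Hardy--Littlewood--Sobolev on $L^{12/11}\to L^6$. The only cosmetic difference is your additional use of pointwise positivity domination for the zeroth-order near-field terms, where the paper uses the energy estimate throughout.
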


\begin{proof}
The equations satisfied by $A$ and $A'$ imply that
\begin{align*}
(-\Delta + |u|^2) (A-A') & = \underbrace{\langle{u}, \nabla u\rangle - \langle{u'}, \nabla u'\rangle}_{\displaystyle F_1} + \underbrace{\frac{1}{2}\nabla \times \langle{u},\sigma u\rangle - \frac{1}{2}\nabla \times \langle{u'},\sigma u'\rangle}_{\displaystyle  F_2} + \underbrace{(|u'|^2 - |u|^2) A'}_{\displaystyle  F_3}.
\end{align*}
The idea is now to split the right-hand side
$$
F=F_1 + F_2 + F_3
$$
into a 'close' and a 'far' contribution by writing
$$
F = \chi_{10R} F + (1- \chi_{10R}) F.
$$

\medskip

\noindent \underline{The close contribution.} We estimate here the solution $A_{c}$ of
$$
(-\Delta + |u|^2) A_{c} = \chi_{10R} F 
$$
by an energy estimate: taking the $L^2$ inner product of this equation with $A_c$ gives
$$
\int \left( |\nabla A_c|^2 + |u|^2 |A_c|^2 \right) \dd x
= \int  \chi_{10R} F \cdot A_c \dd x.
$$
We start by estimating
$$
\int \chi_{10R} F_1 \cdot A_c \dd x =
\int \chi_{10R} \left[ \langle u -u',\nabla u \rangle + \langle u',\nabla u - \nabla u' \rangle \right]  A_c \dd x.
$$
For the first term on the right-hand side, H\"older's inequality and the Sobolev embedding theorem suffice
\begin{align*}
\left| \int \chi_{10R} \langle u -u',\nabla u \rangle \cdot A_n \dd x \right| & \lesssim  \|  u -u' \|_{L^3 (B(0,20R))} \| \nabla u \|_{L^2} \| A_c \|_{L^6} \\
& \lesssim \|  u -u' \|_{L^3 (B(0,20R))} \| u \|_{H^1} \| \nabla A_c \|_{L^2}.
\end{align*}
For the second term, we need to integrate by parts
\begin{align*}
& \int \chi_{10R} \langle u',\nabla u - \nabla u' \rangle  A_c \dd x \\
& = - \int \nabla \chi_{10R} \langle u', u - u' \rangle  A_c \dd x
- \int  \chi_{10R} \langle \nabla u', u - u' \rangle  A_c \dd x
-  \int  \chi_{10R} \langle  u', u - u' \rangle \nabla  A_c \dd x
\end{align*}
before applying H\"older's inequality and the Sobolev embedding theorem to obtain
$$
\left|  \int \chi_{10R} \langle u',\nabla u - \nabla u' \rangle  A_c \dd x \right| \lesssim \|  u -u' \|_{L^3 (B(0,20R))} \| u' \|_{H^1} \| \nabla A_c \|_{L^2}.
$$
The terms $F_2$ and $F_3$ can be estimated by the same arguments to yield
\begin{align*}
& \left| \int \chi_{10R} F_2 \cdot A_c \dd x \right| \lesssim \|  u -u' \|_{L^3 (B(0,20R))} \| u' \|_{H^1} \| \nabla A_c \|_{L^2} \\
& \left| \int \chi_{10R} F_3 \cdot A_c \dd x \right| \lesssim \|  u -u' \|_{L^3 (B(0,20R))} \| u' \|_{H^1} \| \nabla A_c \|_{L^2} \| \nabla A' \|_{L^2}.
\end{align*}
Overall, we find that
\begin{align*}
\| \nabla A_c \|_{L^2}^2 & \leq \left|\int \chi_{10R} F \cdot A_c \dd x \right| \\
& \lesssim  \|  u -u' \|_{L^3 (B(0,20R))}\left[\| u\|_{H^1} + \| u' \|_{H^1} \| \right] \left[ 1 + \| \nabla A' \|_{L^2} \right] \| \nabla A_c \|_{L^2},
\end{align*}
which implies that
$$
\| \nabla A_c \|_{L^2} \lesssim \|  u -u' \|_{L^3 (B(0,20R))}\left[\| u\|_{H^1} + \| u' \|_{H^1} \| \right] \left[ 1 + \| \nabla A' \|_{L^2} \right].
$$
The same bound for $\| A_c \|_{L^6}$ follows by yet another application of the Sobolev embedding theorem.

\medskip
\noindent \underline{The far contribution.}  We estimate here the solution $A_{f}$ of
$$
(-\Delta + |u|^2) A_{f} = (1- \chi_{10R}) F 
$$
by considering the fundamental solution $K(x,y)$ of $-\Delta + |u|^2$. We claim first that it satisfies
\begin{equation}
\label{boundK}
| K(x,y)| \lesssim \frac{1}{|x-y|}.
\end{equation}
The justification of this inequality relies on the convolution kernel of the fundamental solution of the 'flat' Laplacian $K_0(x) = \frac{C_0}{|x|}$. By the maximum principle \cite{Evans}, the solutions $A$ and $A_0$ of $(-\Delta + |u|^2) A = f$ and $-\Delta A_0 = f$ are $\geq 0$ if $f \geq 0$. This implies first that $K(x,y) \geq 0$. Furthermore, $A \leq A_0$ since $-\Delta(A-A_0) = -|u|^2 A \leq 0$, which implies that $K(x,y) \leq K_0(x-y)$. Combining these two inequalities on $K$ gives the bound \eqref{boundK}.

With the help of the fundamental solution $K$, we can now write for $R>1$
$$
\chi_R(x) A_f(x) = \chi_R(x) \int K(x,y) (1- \chi_{10R}(y)) F(y) \dd y.
$$
Due to \eqref{boundK},
$$
\chi_R(x) K(x,y) [1-\chi_{10 R}(y)] \lesssim \frac{1}{|x-y|^{3/4}} \frac{1}{R^{1/4}}.
$$
Furthermore, by H\"older's inequality and the Sobolev embedding theorem,
$$
\| F \|_{L^{12/11}} \lesssim (\| u \|_{ H^1} + \| u' \|_{ H^1})^2 (1 +  \|\nabla A'\|_{L^2})
$$
Combining the two above inequalities and applying the Hardy-Littlewood-Sobolev inequality gives that
$$
\| \chi_R A_f \|_{L^6} \lesssim \frac{1}{R^{1/4}}  (\| u \|_{H^1} + \| u' \|_{H^1})^2 (1 +  \|\nabla A'\|_{L^2}).
$$
\end{proof}

Next we collect some estimates for the regularizing term.

\begin{lemma}
\label{thm:estimates energy}
Let $s\geq 1$ and let $u,u'\in H^1$. Then the following estimates hold:
\begin{align*}
|(u,Hu)| & \leq F(\|u\|_{H^1}) \\
\|(u,H u)u - (u',H'u')u'\|_{H^s} & \lesssim F(\|u\|_{H^1},\|u'\|_{H^1},\|u\|_{H^s})\|u-u'\|_{H^s} \\
\left\|\frac{(u,H u)}{\|u_0\|_{L^2}^2}u - \frac{(u',H'u')}{\|u_0'\|_{L^2}^2}u'\right\|_{H^s} &  \lesssim \frac{F(\|u\|_{H^1},\|u'\|_{H^1},\|u\|_{H^s})}{\|u_0\|_{L^2}^2 |u_0'\|_{L^2}^2}\|u-u'\|_{H^s}.
\end{align*}
Here, $H,H'$ denote the Hamiltonian \eqref{hamiltonian} with respect to $A,A'$ and $V,V'$, defined by 
\begin{align*}
-\Delta A &= J(u,A) & -\Delta V &= |u|^2 \\ 
-\Delta A' &= J(u',A') & -\Delta V' &= |u'|^2.
\end{align*} 
The same estimates hold if $J,J'$ are replaced by $\mathbb{P}J,\mathbb{P}J'$.
\end{lemma}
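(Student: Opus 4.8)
The plan is to reduce all three estimates to the algebraic identity $(u,Hu)_{L^2}=\tfrac12\|(\sigma\cdot\nabla_A)u\|_{L^2}^2+\|\nabla V\|_{L^2}^2$ established in \eqref{eq: regularizing term new}, combined with the elliptic estimates of Lemmas \ref{lemma V} and \ref{lem:apriori_magnetic} and their difference counterparts. For the \emph{first estimate}, $(u,Hu)$ is by \eqref{eq: regularizing term new} a sum of two nonnegative terms; I bound $\|\nabla V\|_{L^2}\lesssim\|u\|_{H^1}^2$ by Hardy--Littlewood--Sobolev (this is the case $s=1$ of Lemma \ref{lemma V}) and
\[
\|(\sigma\cdot\nabla_A)u\|_{L^2}\le\|\nabla u\|_{L^2}+\|Au\|_{L^2}\le\|u\|_{H^1}+\|A\|_{L^6}\|u\|_{L^3}\lesssim\|u\|_{H^1}+\|u\|_{H^1}^2,
\]
using \eqref{eq:estimate A H1} and the Sobolev embedding; squaring gives $|(u,Hu)|\le F(\|u\|_{H^1})$.

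\textbf{Second estimate.} I split
\[
(u,Hu)u-(u',H'u')u'=\bigl[(u,Hu)-(u',H'u')\bigr]u+(u',H'u')(u-u').
\]
The last summand has $H^s$ norm $|(u',H'u')|\,\|u-u'\|_{H^s}\le F(\|u'\|_{H^1})\|u-u'\|_{H^s}$ by the first estimate. Since $(u,Hu)-(u',H'u')$ is a scalar, the first summand has $H^s$ norm $|(u,Hu)-(u',H'u')|\,\|u\|_{H^s}$, so it suffices to prove the scalar bound $|(u,Hu)-(u',H'u')|\lesssim F(\|u\|_{H^1},\|u'\|_{H^1})\|u-u'\|_{H^1}$, which then dominates $\|u-u'\|_{H^1}\le\|u-u'\|_{H^s}$ since $s\ge1$. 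Using \eqref{eq: regularizing term new}, the kinetic contribution $\tfrac12\bigl(\|(\sigma\cdot\nabla_A)u\|_{L^2}^2-\|(\sigma\cdot\nabla_{A'})u'\|_{L^2}^2\bigr)$ is controlled by polarization through $\|(\sigma\cdot\nabla_A)u-(\sigma\cdot\nabla_{A'})u'\|_{L^2}$ times the sum of the two norms (the latter $\le F(\|u\|_{H^1},\|u'\|_{H^1})$ by the first estimate), and I write
\[
(\sigma\cdot\nabla_A)u-(\sigma\cdot\nabla_{A'})u'=\sigma\cdot\nabla(u-u')-i\,\sigma\cdot\bigl(A(u-u')\bigr)-i\,\sigma\cdot\bigl((A-A')u'\bigr),
\]
estimating the three terms in $L^2$ respectively by $\|u-u'\|_{H^1}$, by $\|\nabla A\|_{L^2}\|u-u'\|_{H^1}\lesssim\|u\|_{H^1}\|u-u'\|_{H^1}$ (Hölder, Sobolev, \eqref{eq:estimate A H1}), and by $\|\nabla(A-A')\|_{L^2}\|u'\|_{H^1}$, the decisive one, handled via the Lipschitz estimate \eqref{eq: estimate nabla A - A'}. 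The potential contribution $\int V|u|^2-\int V'|u'|^2=\int(V-V')|u|^2+\int V'(|u|^2-|u'|^2)$ is bounded by $\|\nabla(V-V')\|_{L^2}\|u\|_{H^1}^2+\|\nabla V'\|_{L^2}(\|u\|_{H^1}+\|u'\|_{H^1})\|u-u'\|_{H^1}$ via Hölder and Sobolev, with $\|\nabla(V-V')\|_{L^2}\lesssim\||u|^2-|u'|^2\|_{L^{6/5}}\lesssim(\|u\|_{H^1}+\|u'\|_{H^1})\|u-u'\|_{H^1}$ being the difference version of Lemma \ref{lemma V}, proved verbatim.

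\textbf{Third estimate.} I factor out the normalization constants by writing
\[
\frac{(u,Hu)}{\|u_0\|_{L^2}^2}u-\frac{(u',H'u')}{\|u_0'\|_{L^2}^2}u'=\frac{1}{\|u_0\|_{L^2}^2}\bigl[(u,Hu)u-(u',H'u')u'\bigr]+\Bigl(\frac{1}{\|u_0\|_{L^2}^2}-\frac{1}{\|u_0'\|_{L^2}^2}\Bigr)(u',H'u')u'.
\]
The first term is bounded by the second estimate divided by $\|u_0\|_{L^2}^2$; the second uses $\bigl|\|u_0\|_{L^2}^2-\|u_0'\|_{L^2}^2\bigr|\le\|u_0-u_0'\|_{L^2}(\|u_0\|_{L^2}+\|u_0'\|_{L^2})$ together with the first estimate for $(u',H'u')$ and Sobolev embedding for $\|u'\|_{H^s}$, and vanishes altogether in the applications, where $u_0=u_0'$. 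Combining gives the stated bound. The case with $\mathbb{P}J,\mathbb{P}J'$ in place of $J,J'$ is identical, since $\mathbb{P}$ is bounded on $L^p$ for $1<p<\infty$ and all the elliptic estimates on $A$ hold verbatim for the Pauli-Darwin potential (cf. Lemma \ref{lem:apriori_magnetic}).

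\textbf{Main obstacle.} The only genuinely non-routine point is the control of the magnetic difference, i.e. the term $(A-A')u'$ appearing in the kinetic part of $(u,Hu)-(u',H'u')$: because $A$ is a nonlocal and nonlinear function of $u$ (it solves a quasilinear Poisson equation with potential $|u|^2$), Lipschitz dependence of $A$ on $u$ is not automatic and one must invoke the energy estimate \eqref{eq: estimate nabla A - A'}. Everything else is bookkeeping with Hölder's inequality, Sobolev embeddings, and the fractional Leibniz rule.
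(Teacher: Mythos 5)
Your proof is correct and follows essentially the same route as the paper's: reduce everything to the scalar Lipschitz bound $|(u,Hu)-(u',H'u')|\lesssim F(\|u\|_{H^1},\|u'\|_{H^1})\|u-u'\|_{H^1}$, handle the kinetic part by polarization together with the difference estimate \eqref{eq: estimate nabla A - A'} for $\nabla(A-A')$, and the potential part via the $L^{6/5}$ bound on $|u|^2-|u'|^2$. The only deviations (which summand carries the $H^s$ norm in the product splitting, writing the polarization as $\|a-b\|(\|a\|+\|b\|)$ rather than expanding six cross terms, and noting explicitly that the $\|u_0\|-\|u_0'\|$ contribution vanishes when $u_0=u_0'$) are cosmetic.
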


\begin{proof}
The first estimate follows from lemmas \ref{lemma V} and \ref{lem:apriori_magnetic}.

For the estimates on differences, we write
\begin{align}
 \left|(u,H u) - (u',H'u')\right| &\leq \left|\|(\sigma \cdot \nabla_A)u\|^2_{L^2} -  \|(\sigma \cdot \nabla_{A'})u'\|^2_{L^2}\right| \nonumber \\ &\qquad +  \left|\|\nabla V\|^2_{L^2}  - \|\nabla V'\|^2_{L^2} \right|.
    \label{eq:difference kinetic energies}
\end{align}
The first term on the RHS can be bounded by
\begin{align*}
    \|(\sigma \cdot \nabla_A)u\|^2_{L^2} -  \|(\sigma \cdot \nabla_{A'})u'\|^2_{L^2} &= ((\sigma \cdot \nabla)(u-u'), (\sigma\cdot \nabla_A) u) + (-i(A-A')u, (\sigma\cdot \nabla_A) u) \\ &+ (-iA'(u-u'),(\sigma\cdot \nabla_A) u) + ((\sigma \cdot \nabla_{A'})u', (\sigma \cdot \nabla)(u-u')) \\ &+ ((\sigma \cdot \nabla_{A'})u',-i(A-A')u) + ((\sigma \cdot \nabla_{A'})u',-iA'(u-u')).
\end{align*}
This can be estimated as
\begin{align*}
    \left|\|(\sigma \cdot \nabla_A)u\|^2_{L^2} -  \|(\sigma \cdot \nabla_{A'})u'\|^2_{L^2}\right| \lesssim& \|\nabla(u-u')\|_{L^2} \| \nabla_A u\|_{L^2} + \|A-A'\|_{L^6} \|u\|_3 \| \nabla_A u \|_{L^2} \\ &+ \|A'\|_{L^6} \|u-u'\|_3\| \nabla_A u\|_{L^2} + \| \nabla_{A'} u'\|_{L^2}\|\nabla(u-u')\|_{L^2} \\ &+ \| \nabla_{A'} u'\|_{L^2}\|A-A'\|_{L^6} \|u\|_3 + \| \nabla_{A'} u'\|_{L^2}\|A'\|_{L^6} \|u-u'\|_3.
\end{align*}
Together with the estimates \eqref{eq: estimate nabla A - A'} and $\|\nabla_{A}u\|_{L^2} \lesssim \|u\|_{H^1}$, $\|\nabla_{A'}u'\|_{L^2} \lesssim \|u'\|_{H^1}$ we obtain
\begin{equation}
    \left|\|(\sigma \cdot \nabla_A)u\|^2_{L^2} -  \|(\sigma \cdot \nabla_{A'})u'\|^2_{L^2}\right| \lesssim F(\|u\|_{H^1},\|u'\|_{H^1}) \|u-u'\|_{H^1},
\end{equation}

For the electric potential, we obtain
\begin{align*}
    \|\nabla V\|_{L^2}^2 - \|\nabla V'\|_{L^2}^2 &= \int |\nabla V|^2 - \int |\nabla V'|^2 \\
    &= \int \nabla V \cdot (\nabla V - \nabla V') + \int \nabla V' \cdot (\nabla V - \nabla V') \\
    &\leq \left(\|\nabla V\|_{L^2}+\|\nabla V'\|_{L^2}\right) \|\nabla V - \nabla V'\|_{L^2}.
\end{align*}
The last factor above can be estimated by the Sobolev embedding theorem and H\"older's inequality:
\begin{align*}
\|\nabla V - \nabla V'\|_{L^2} & = \| \Delta^{-1} \nabla (|u|^2 - |u'|^2) \|_{L^2} \lesssim \| |u|^2 - |u'|^2 \|_{L^{6/5}} \\
& \lesssim \left( \| u \|_{L^2} + \| u' \|_{L^2} \right) \| u-u' \|_{L^3} \lesssim \| u \|_{H^1} \| u - u' \|_{H^1}.
\end{align*}

Overall, we obtain
\begin{equation}
    \left|(u,H u) - (u',H'u')\right| \lesssim F(\|u\|_{H^1},\|u'\|_{H^1}) \|u-u'\|_{H^1},
\end{equation}
and thus
\begin{align}
    \left\|(u,H u)u - (u',H'u')u'\right\|_{H^s} &\leq \left|(u,H u) - (u',H'u')\right| \|u'\|_{H^s}+ |(u,Hu)|\|u-u'\|_{H^s} \nonumber \\
    &\lesssim  F(\|u\|_{H^1},\|u'\|_{H^1})\|u-u'\|_{H^1} \|u'\|_{H^s} + F(\|u\|_{H^1}) \|u-u'\|_{H^s}.
    \label{eq:difference auxilliary}
\end{align}

For the last statement we have,
\begin{align*}
    \left|\frac{(u,Hu)}{\|u_0\|_{L^2}^2}-\frac{(u',H'u')}{\|u_0'\|_{L^2}^2}\right| &\leq \left|\frac{(u,Hu)\|u_0'\|_{L^2}^2-(u',H'u')\|u_0\|_{L^2}^2}{\|u_0'\|_{L^2}^2\|u_0\|_{L^2}^2}\right| \\ &\leq \left|\frac{(u,Hu)(\|u_0\|_{L^2}^2-\|u_0'\|_{L^2}^2)}{\|u_0'\|_{L^2}^2\|u_0\|_{L^2}^2}\right| + \left|\frac{(u,Hu)-(u',H'u')}{\|u_0'\|_{L^2}^2}\right| \\
    &\leq \frac{\left|(u,Hu)\right|(\|u_0\|_{L^2}+\|u_0'\|_{L^2})}{\|u_0'\|_{L^2}^2\|u_0\|_{L^2}^2}\|u_0-u_0'\|_{L^2} + \frac{1}{\|u_0'\|_{L^2}^2}\left|(u,Hu)-(u',H'u')\right| \\
    &\leq \frac{F(\|u\|_{H^1},\|u'\|_{H^1})}{\|u_0'\|_{L^2}^2\|u_0\|_{L^2}^2} \|u-u'\|_{L^2} + \frac{F(\|u\|_{H^1},\|u'\|_{H^1})}{\|u_0'\|_{L^2}^2} \|u-u'\|_{H^1}
    \\
    &\leq \frac{F(\|u\|_{H^1},\|u'\|_{H^1})}{\|u_0'\|_{L^2}^2\|u_0\|_{L^2}^2} \|u-u'\|_{H^1}
\end{align*}
The claim follows in a similar fashion to \eqref{eq:difference auxilliary}.
\end{proof}

\section{Parabolic regularization}
\label{Parabolic regularization}
In this section we show wellposedness of the regularized equation \eqref{eq:new regularization} on a time interval depending on $\epsilon$. In Section \ref{sec:lwp}, owing to an a priori estimate (Lemma \ref{thm:grönwall}), we obtain local strong solutions in $H^s$, $s>3/2$, where the time interval of existence no longer depends on $\epsilon$, thereby proving Theorem \ref{thm:local wellposedness}.

In Section \ref{sec:weak sol}, due to the dissipation of energy (Lemma \ref{thm:decay lemma}), we can extend the regularized solutions globally in $H^1$. Thus we obtain existence of global weak solutions in the energy space when passing to the limit, thereby proving Theorem \ref{th:global weak solutions}. \\

Recall the regularized Pauli-Darwin equation 
\begin{align} \label{eq:new regularization Darwin}
 \partial_{t} u &= - (i+\epsilon) H u  + \epsilon \frac{(u,Hu)}{\|u_0\|_{L^2}^2} u, \quad H = -\frac{1}{2}(\sigma \cdot \nabla_{A})^2 + V, \\
-\Delta V &= |u|^{2}, \\
-\Delta A &= J-\partial_t \nabla V, \quad B= \nabla \times A, \\
\nabla \cdot A &= 0, \\
u(0,x) &=u_0(x). \label{eq:new regularization Darwin data}
\end{align}
and the regularized Pauli-Poisswell equation
\begin{align}
\partial_{t} u &= - (i+\epsilon) H u  + \epsilon \frac{(u,Hu)}{\|u_0\|_{L^2}^2} u, \quad H = -\frac{1}{2}(\sigma \cdot \nabla_{A})^2 + V, \label{eq:new regularization Poisswell}\\
-\Delta V &= |u|^{2}, \\
-\Delta A &= J, \quad B= \nabla \times A, \\
\nabla \cdot A + \partial_t V &= 0, \\
u(0,x) &=u_0(x).
\label{eq:new regularization Poisswell data}
\end{align}
We rewrite the equations as
\begin{align}
\partial_t u  &= \frac{i + \epsilon}{2}\Delta u + K^{\star}_{\epsilon}(u) + \epsilon\frac{(u,Hu)}{\|u_0\|_{L^2}^2} u
\label{eq:regDarwin}\\
    u(0,x) &= u_0(x)
\end{align}
where $\star = \text{D, PW}$ (Pauli-Darwin and Pauli-Poisswell respectively) and
\begin{align}
& K^{\text{D}}_{\epsilon}(u) = - (i+\epsilon)\left(i\nabla(A u) + \frac{1}{2}|A|^2 u  - \frac{1}{2} (\sigma \cdot B)u + Vu\right)\label{eq:nonlinearity Darwin} \\
& K^{\text{PW}}_{\epsilon}(u) = - (i+\epsilon)\left(i \nabla(A u) - \frac{i}{2}(\nabla \cdot A) u + \frac{1}{2}|A|^2 u  - \frac{1}{2} (\sigma \cdot B)u + Vu\right)\label{eq:nonlinearity Poisswell}
\end{align} 
together with the Poisson equation for $A$, respectively for Darwin and Poisswell:
\begin{align}
    -\Delta A &= \mathbb{P}J= \mathbb{P}\left[\Im \langle{u},\nabla_Au\rangle+\frac{1}{2} \nabla \times \langle{u},\sigma u\rangle\right], \quad \nabla \cdot A = 0\label{eq:regPaulicurrent Darwin} \\
    -\Delta A &= J= \Im \langle{u},\nabla_Au\rangle+\frac{1}{2} \nabla \times \langle{u},\sigma u\rangle, \quad \nabla \cdot A + \partial_t V = 0\label{eq:regPaulicurrent Poisswell} 
\end{align}

\begin{proposition}
\label{proposition lwp regularized}
Let $\epsilon>0$ and let $u_0 \in H^s(\mathbb{R}^3)$, with $s\geq 1$ a real number. Then there exists a $T_{\epsilon}>0$, depending on $\|u_0\|_2$, $\|u_0\|_{H^s}$ and $\epsilon$, such that the regularized Pauli-Darwin and Pauli-Poisswell equations are locally well-posed in the space $C((0,T_{\epsilon}),H^s(\mathbb{R}^3))$ with initial data $u_0$.
\end{proposition}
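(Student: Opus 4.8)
The plan is to set up a fixed-point argument for the mild (Duhamel) formulation of \eqref{eq:regDarwin}. Since the leading operator is $\frac{i+\epsilon}{2}\Delta$ with $\epsilon>0$, the associated semigroup $e^{t(i+\epsilon)\Delta/2}$ is a genuine (complex) heat semigroup; it is bounded on every $H^s$ and, crucially, smoothing: $\|e^{t(i+\epsilon)\Delta/2}f\|_{H^{s+\theta}} \lesssim \epsilon^{-\theta/2} t^{-\theta/2}\|f\|_{H^s}$ for $\theta \ge 0$, with the $t^{-\theta/2}$ singularity integrable for $\theta<2$. I would write the solution map as
\begin{equation*}
\Phi(u)(t) = e^{t(i+\epsilon)\Delta/2}u_0 + \int_0^t e^{(t-\tau)(i+\epsilon)\Delta/2}\left[ K^{\star}_\epsilon(u(\tau)) + \epsilon\frac{(u,Hu)}{\|u_0\|_{L^2}^2}u(\tau)\right]\dd\tau
\end{equation*}
and look for a fixed point in $X_T = C([0,T],H^s(\mathbb{R}^3))$ (or a closed ball therein), where $T = T_\epsilon$ is to be chosen small depending on $\|u_0\|_{L^2}$, $\|u_0\|_{H^s}$ and $\epsilon$.

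The heart of the matter is to estimate the nonlinear terms $K^{\star}_\epsilon(u)$ and the regularizing term in $H^s$, and to estimate their differences (for the contraction/uniqueness). Here I would invoke the elliptic estimates already established: Lemma \ref{lemma V} controls $V$ and $Vu$ in $H^s$ by powers of $\|u\|_{H^s}$ and $\|u\|_{H^1}$; Lemma \ref{lem:apriori_magnetic} controls $\|A\|_{L^\infty}$, $\|\nabla A\|_{L^\infty}$ and $\|\nabla A\|_{H^s}$ by $F(\|u\|_{H^s})$ when $s>3/2$, and $\|\nabla(A-A')\|$ in terms of $\|u-u'\|$; Lemma \ref{thm:estimates energy} controls the regularizing term and its differences. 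With these, each summand of $K^{\star}_\epsilon(u)$ — namely $\nabla(Au)$, $|A|^2u$, $(\sigma\cdot B)u$, $(\nabla\cdot A)u$ and $Vu$ — is bounded in $H^{s-1}$ (note the $\nabla(Au)$ term loses one derivative, which is exactly compensated by the $\theta=1$ smoothing of the semigroup) by $F(\|u\|_{H^s})$, using the algebra/Moser estimates for $H^s$ with $s>3/2$. A subtlety: for $1\le s\le 3/2$ the space $H^s$ is not an algebra and $L^\infty$ control of $A$ is not immediate from Lemma \ref{lem:apriori_magnetic}(iii); in that range I would instead exploit the parabolic smoothing more aggressively, running the fixed point in a space with a $t^{-\beta}$-weighted higher-regularity norm (à la $\sup_t t^{\beta}\|u(t)\|_{H^{s'}}$ with $s'>3/2$) so that $A$, $B$ regain the needed $L^\infty$ bounds for positive times, at the cost of the constants blowing up as $\epsilon\to0$ and as $t\to0$ — which is acceptable since $T_\epsilon$ is allowed to depend on $\epsilon$. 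Alternatively, and more cleanly, one can first treat $s>3/2$ directly and then propagate to $s\ge 1$ by a persistence-of-regularity / density argument using the $H^1$ energy structure; I would present the $s>3/2$ case as the main argument.

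Assembling the estimates, on the ball $B_R = \{u\in X_T : \|u\|_{X_T}\le R\}$ with $R = 2\|u_0\|_{H^s}$ one gets
\begin{equation*}
\|\Phi(u)\|_{X_T} \le \|u_0\|_{H^s} + C(\epsilon)\,T^{1/2}\,F(R), \qquad \|\Phi(u)-\Phi(v)\|_{X_T} \le C(\epsilon)\,T^{1/2}\,F(R)\,\|u-v\|_{X_T},
\end{equation*}
the $T^{1/2}$ coming from $\int_0^T (t-\tau)^{-1/2}\dd\tau$ (the worst singularity, from the $\nabla(Au)$ term); choosing $T=T_\epsilon$ small enough that $C(\epsilon)T_\epsilon^{1/2}F(R)\le \min(\tfrac12 R, \tfrac12)$ makes $\Phi$ a contraction of $B_R$ into itself, and the Banach fixed point theorem yields a unique solution in $B_R$. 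Uniqueness in all of $X_T$ follows by the usual Gronwall argument on the difference of two solutions. Finally I would check that the fixed point is a genuine solution of \eqref{eq:new regularization Darwin}–\eqref{eq:new regularization Poisswell}: the $A$-equation is solved simultaneously via the elliptic theory of Lemma \ref{lem:apriori_magnetic} (for Pauli-Darwin with $\mathbb{P}J$; for Pauli-Poisswell one additionally recovers $\partial_t V$ from the continuity equation to close the Lorenz gauge), and time-continuity with values in $H^s$ follows from the Duhamel formula and dominated convergence. The main obstacle, as indicated, is the low-regularity range $1\le s\le 3/2$ where $H^s$ fails to be an algebra and the $L^\infty$ bounds on $A,B$ from Lemma \ref{lem:apriori_magnetic}(iii) are unavailable; exploiting the instantaneous parabolic smoothing of $e^{t(i+\epsilon)\Delta/2}$ to work in a time-weighted higher-regularity norm is what makes the argument go through there.
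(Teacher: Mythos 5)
Your proposal follows the same overall architecture as the paper: Duhamel formulation with the complex heat semigroup $S(t)=\exp(\tfrac{i+\epsilon}{2}t\Delta)$, a contraction on a ball of $C([0,T],H^s)$, and the elliptic Lemmas \ref{lemma V}, \ref{lem:apriori_magnetic} and \ref{thm:estimates energy} to control the nonlinearity and its differences. The substantive difference lies in \emph{which} smoothing property of $S(t)$ you exploit, and this is precisely what decides whether the full range $s\ge 1$ of the proposition is covered. You measure $K^\star_\epsilon(u)$ in $H^{s-1}$ and recover the lost derivative via the $H^{s-1}\to H^s$ gain with a $t^{-1/2}$ singularity; as you correctly observe, placing $\nabla(Au)$, $|A|^2u$, $(\sigma\cdot B)u$ in $H^{s-1}$ then forces you to use $L^\infty$ bounds on $A$, $\nabla A$ and algebra/Moser estimates, which require $s>3/2$. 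The paper instead uses the \emph{integrability} gain of the parabolic semigroup, estimating the nonlinear terms in $W^{s,3/2}$ and $W^{s-1,3/2}$ and invoking $\|S(t)f\|_{H^s}\lesssim (\epsilon t)^{-1/4}(1+(\epsilon t)^{-1/2})\|\langle D\rangle^{s-1}f\|_{L^{3/2}}$; every product is then handled by H\"older pairings of the type $L^6\times L^2\hookrightarrow L^{3/2}$ together with the fractional Leibniz rule, so no $L^\infty$ norm of $A$ or $u$ and no algebra property of $H^s$ is ever needed, and the argument runs uniformly for $s\ge 1$ (the term $Vu$ being treated directly in $H^s$ via Lemma \ref{lemma V}, which holds for $s\ge1$). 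Of your two proposed fixes for $1\le s\le 3/2$, the Kato-type time-weighted norm $\sup_t t^\beta\|u(t)\|_{H^{s'}}$ with $s'>3/2$ is workable (the $\epsilon$-dependent constants are harmless here), but the ``persistence of regularity / density'' alternative is not: mollifying $u_0$ produces data whose $H^{s'}$ norms blow up as the mollification parameter tends to zero, and since your $T_\epsilon$ from the $s'>3/2$ argument depends on $\|u_0\|_{H^{s'}}$, the existence times degenerate and you cannot pass to the limit without an independent a priori estimate at the $H^s$ level — which is exactly what the fixed point is supposed to provide. So either adopt the paper's $L^{3/2}$-based estimates or commit to the time-weighted scheme; do not rely on the density argument.
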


\begin{proof} \underline{Reformulation as a fixed point problem.}

We introduce the heat-Schrödinger semigroup 
\begin{equation*}
     S(t) = \exp(\frac{i+\epsilon}{2}t\Delta).
\end{equation*}
which enjoys the following estimates: for any $s\geq 0$, $1<r\leq 2 \leq \infty$ and $f\in L^r$, 
\begin{equation}
 \|S(t) f\|_{H^s} \lesssim (\epsilon t)^{-\frac{3}{2}\left(\frac{1}{r}-\frac{1}{2}\right)}\left(1+(\epsilon t)^{-\frac{s}{2}}\right) \|f\|_{L^r} 
 \label{eq:general estimate heat Schrödinger}
\end{equation}

We can now rely on Duhamel's formula to recast the problem we want to solve as a fixed point problem: we will show that the operator
\begin{equation}
    \Phi (u(t))= S(t)u_0 + \int_0^t S(t-\tau)\left[K_{\epsilon}^\star(u(\tau)) + \epsilon\frac{(u,H u)}{\|u_0\|_{L^2}^2} u(\tau)\right] \dd \tau,
    \label{eq:integral_equation}
\end{equation}
for $\star=\text{D},\text{PW}$ 
has a fixed point in $X_T^s$ given by
\begin{equation}
    X_T^s = \{u\in L^{\infty}_T H^s \colon \| u(t) \|_{L^{\infty}_T H^s} < R \quad \forall t\in [0,T)\}
\end{equation}
equipped with the metric
\begin{equation}
    d(u,u') = \|u-u'\|_{L^{\infty}_TH^s}
\end{equation}



\medskip

\noindent \underline{Bounds on $\Phi$.}
Let $s\geq 1$. First note that
\begin{equation*}
\|S(t)u_0\|_{H^s} = \|\langle D\rangle^{s} S(t) u_0 \|_{L^2} = \|S(t)\langle D\rangle^{s} u_0 \|_{L^2} \lesssim \|u_0\|_{H^s},
\end{equation*}
by \eqref{eq:general estimate heat Schrödinger}. We now turn to the Duhamel term and write
$$
K^\star_\epsilon(u) = \mathcal{K}(u) + Vu,
$$
since $Vu$ will be treated differently from the other terms.

By \eqref{eq:general estimate heat Schrödinger} with $p=2$ and $s=1$,
        \begin{align*}
            \|S(t-\tau)\mathcal{K}(u(\tau))\|_{H^s} \lesssim \frac{1}{\epsilon^{1/4}(t-\tau)^{1/4}}\left(1+\frac{1}{\epsilon^{1/2}(t-\tau)^{1/2}}\right)\|\langle D \rangle^{s-1} K(u(\tau))\|_{L^{3/2}}
        \end{align*}
By \eqref{eq:nonlinearity Darwin}-\eqref{eq:nonlinearity Poisswell},
\begin{align*}
\|\langle D \rangle^{s-1} \mathcal{K}(u)\|_{L^{3/2}} \leq \|Au\|_{W^{s,3/2}} + \|(\nabla A) u\|_{W^{s-1,3/2}} &+ \||A|^2 u\|_{W^{s-1,3/2}}.
\end{align*}
Since there is no $L^2$-estimate for $A$, we split the estimate for the first term into two cases: by H\"older's inequality
\begin{equation*}
\|Au\|_{L^{3/2}} \lesssim \|A\|_{L^6}\|u\|_{L^2} \lesssim \|u\|_{H^1}^2,
\end{equation*}
and with the help of Lemma \ref{lem:apriori_magnetic} and of the fractional Leibniz rule,
\begin{align*}
\|Au\|_{\dot{W}^{s,3/2}} &\lesssim \|A\|_{\dot{H}^s} \|u\|_{L^6} + \|A\|_{L^6} \|u\|_{\dot{H}^s} \\ &\lesssim \|u\|_{\dot{H}^1}(\|A\|_{\dot{H}^s}+  \|u\|_{\dot{H}^s}) 
\\ &\lesssim F(\|u\|_{H^s})
\end{align*}
We also split the estimate for the third term into two cases: by H\"older's inequality
\begin{equation*}
\||A|^2u\|_{L^{3/2}} \leq \|A\|_{L^6}^2 \|u\|_3^2 \lesssim \|u\|_{H^1}^3,
\end{equation*}
and, again with the help of Lemma \ref{lem:apriori_magnetic}, of the fractional Leibniz rule and of the Sobolev embedding theorem,
\begin{align*}
\||A|^2u\|_{\dot{W}^{s-1,3/2}} &\lesssim \||A|^2\|_{\dot{W}^{s-1,2}}\|u\|_{L^6} + \||A|^2\|_3 \|u\|_{\dot{W}^{s-1,3}} \\
&\lesssim \|u\|_{\dot{H}^1} \|A\|_{\dot{H}^{s-1/2}}^2 + \| u \|_{\dot H^1}^2 \|u\|_{\dot{H}^{s-1/2}} \\
&\lesssim F(\|u\|_{{H}^s})
\end{align*}
The remaining terms is estimated: by the same arguments as above,
\begin{align*}
\|(\nabla A) u \|_{W^{s-1,3/2}} &\lesssim \|\nabla A\|_{H^{s-1}} \|u\|_{L^6} + \|\nabla A\|_{L^2} \|u\|_{W^{s-1,6}} \lesssim F(\|u\|_{H^s})
\end{align*}
Turning to $Vu$, Lemma \ref{lemma V} gives
\begin{align*}
\|S(t-\tau) Vu\|_{H^s} \lesssim \|Vu \|_{H^s} \lesssim \|u\|_{H^s}^3
\end{align*}
        
Finally, we have
\begin{align*}
\left\|\langle D \rangle^{s} S(t-\tau) \frac{(u,Hu)}{\|u_0\|_{L^2}^2} u(\tau) \right\|_{L^2} &\lesssim \|\langle D\rangle^s \frac{(u,Hu)}{\|u_0\|_{L^2}^2} u\|_{L^2} \leq \frac{|(u,Hu)|}{\|u_0\|_{L^2}^2} \|u\|_{H^s} \lesssim \frac{F(\|u\|_{H^1})}{\|u_0\|_{L^2}^2}\|u\|_{H^s},
\end{align*}
where we used Lemma \ref{thm:estimates energy}. Collecting the previous estimates yields
\begin{align*}
    \|\Phi(u(t))\|_{H^s} &\lesssim 
    \|u_0\|_{H^s}+\int_0^t \frac{1}{\epsilon^{1/4}(t-\tau)^{1/4}}\left(1+\frac{1}{\epsilon^{1/2}(t-\tau)^{1/2}}\right) F(\|u\|_{H^s})
    \dd \tau\\
    &\quad + \int_0^t \| u \|_{H^s}^3 \dd \tau + \epsilon\int_0^t \frac{F(\|u\|_{H^1})}{\|u_0\|_{L^2}^2}\|u(\tau)\|_{H^s} \dd \tau.
\end{align*}
Using $\|u\|_{L^{\infty}_T H^{s}} < R$ and the fact that $\|u_0\|_{L^2} > 0$ yields
\begin{align*}
\|\Phi(u(t))\|_{X^s_T} \lesssim \|u_0\|_{H^s}  + F(R) \left(\frac{1}{\epsilon^{1/4}}T^{3/4}+\frac{1}{\epsilon^{3/4}}T^{1/4} \right)+ T R^3 + \epsilon F(R) T.
\end{align*}
for some continuous function $F$. This means we can find a $T_{\epsilon}^{\ast}\leq T_{u_0}$ such that $\Phi$ maps $X_{T_{\epsilon}^{\ast}}^s$ onto itself. By a similar reasoning there is a $T_{\epsilon}<T_{\epsilon}^*$ such that $\Phi$ is a contraction on $X_{T_{\epsilon}}^s$. By Banach's fixed point theorem we deduce that there is a unique solution $u\in C([0,T_{\epsilon}),H^s(\mathbb{R}^3))$ to the Pauli-Darwin and Pauli-Poisswell equations, respectively.
\end{proof}

\section{Local wellposedness}
\label{sec:lwp}

In this section we prove the unique existence of local solutions to the Pauli-Darwin equation \eqref{eq:PD_Pauli_scaled}-\eqref{eq:PD_data_scaled} and the Pauli-Poisswell equation \eqref{eq:PPW_Pauli_scaled}-\eqref{eq:PPW_data_scaled}. We use energy estimates and prove an a priori bound for $\|u\|_{H^s}$, $s>3/2$, cf. Lemma \ref{thm:grönwall}. We then prove Theorem \ref{thm:local wellposedness} in Section \ref{proof of lwp}. 

\subsection{A priori estimate}

\begin{lemma}
\label{thm:grönwall}
Let $u$ be a solution in ${C}^1_t H^s_x$ of either the Pauli-Darwin equation or the Pauli-Poisswell equation, and let $s > \frac 32$. Then we have the following differential inequality

\begin{equation}
\label{eq:grönwall2}
\frac d {dt}\|u(t)\|_{H^s}^2 \lesssim F(\| u \|_{H^s})
\end{equation}
for a continuous function $F$.

{This estimate remains valid for the regularized equations \eqref{eq:new regularization Darwin}-\eqref{eq:new regularization Darwin data} and \eqref{eq:new regularization Poisswell}-\eqref{eq:new regularization Poisswell data}, uniformly in $\epsilon$.}
\end{lemma}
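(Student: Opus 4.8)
\section*{Proof proposal for Lemma~\ref{thm:grönwall}}

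The plan is to run an $H^s$ energy estimate on the equations written in the compact form $i\partial_t u = Hu$, with $H = -\tfrac12(\sigma\cdot\nabla_A)^2 + V$, and to use the symmetry of $H$ (Section~\ref{sec:useful_identities}) to eliminate the top-order term, reducing the whole estimate to a single commutator bound. Differentiating,
\begin{align*}
\frac{d}{dt}\|u(t)\|_{H^s}^2 &= 2\,\Re\big(\langle D\rangle^s u,\langle D\rangle^s\partial_t u\big)_{L^2} = 2\,\Im\big(\langle D\rangle^s u,\langle D\rangle^s Hu\big)_{L^2} \\
&= 2\,\Im\big(\langle D\rangle^s u,[\langle D\rangle^s,H]u\big)_{L^2},
\end{align*}
since $\big(\langle D\rangle^s u,H\langle D\rangle^s u\big)_{L^2}\in\mathbb{R}$ by the symmetry of $H$. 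Hence it suffices to show $\big\|[\langle D\rangle^s,H]u\big\|_{L^2}\lesssim F(\|u\|_{H^s})$.

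Expanding $H = -\tfrac12\Delta + iA\cdot\nabla + \tfrac i2(\nabla\cdot A) + \tfrac12|A|^2 - \tfrac12\,\sigma\cdot B + V$, the Laplacian commutes with $\langle D\rangle^s$ and drops out, leaving the commutators of $\langle D\rangle^s$ with the first-order operator $A\cdot\nabla$ and with the multiplication operators by $\nabla\cdot A$, $|A|^2$, $\sigma\cdot B$ and $V$. For the multiplications by $\nabla\cdot A$, $\sigma\cdot B$ and $V$ the commutator structure is not even needed: $\|[\langle D\rangle^s,b]u\|_{L^2}\le\|bu\|_{H^s}+\|b\|_{L^\infty}\|u\|_{H^s}\lesssim F(\|u\|_{H^s})$ by the fractional Leibniz rule together with the elliptic estimates of Lemmas~\ref{lemma V} and~\ref{lem:apriori_magnetic} (using $\|\nabla A\|_{H^s}\lesssim F(\|u\|_{H^s})$, hence $\|\nabla\cdot A\|_{H^s},\|\sigma\cdot B\|_{H^s}\lesssim F(\|u\|_{H^s})$; $\|Vu\|_{H^s}\lesssim\|u\|_{H^1}^2\|u\|_{H^s}$; the attendant $L^\infty$ bounds; and $\|u\|_{L^\infty}\lesssim\|u\|_{H^s}$ for $s>\tfrac32$), with no loss of derivatives. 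For $|A|^2$ the diagonal term $\big(\langle D\rangle^s u,|A|^2\langle D\rangle^s u\big)_{L^2}$ is real, so only its commutator survives, and that commutator is bounded by $F(\|u\|_{H^s})$ through a Kato--Ponce estimate using $\|A\|_{L^\infty}\|\nabla A\|_{L^\infty}$ and $\|\langle D\rangle^s A\|_{L^6}\lesssim\|\nabla A\|_{H^s}$, which avoids the (unavailable) $L^2$ bound on $A$. The genuinely delicate term is $A\cdot\nabla u$, since a direct bound of $\|A\cdot\nabla u\|_{H^s}$ would involve $\|u\|_{H^{s+1}}$; here one must keep the commutator and invoke a Kato--Ponce-type commutator estimate, e.g.
\[
\big\|[\langle D\rangle^s,A_j]\partial_j u\big\|_{L^2}\lesssim \|\nabla A\|_{L^\infty}\big\|\langle D\rangle^{s-1}\partial_j u\big\|_{L^2}+\big\|\langle D\rangle^s A\big\|_{L^6}\|\partial_j u\|_{L^3}.
\]
The first term is $\lesssim\|\nabla A\|_{L^\infty}\|u\|_{H^s}\lesssim F(\|u\|_{H^s})$, and in the second one uses that $A$ is one derivative smoother than $u$: $\|\langle D\rangle^s A\|_{L^6}\lesssim\|\nabla A\|_{H^s}\lesssim F(\|u\|_{H^s})$, while $\|\partial_j u\|_{L^3}\lesssim\|u\|_{H^{3/2}}\le\|u\|_{H^s}$ for $s\ge\tfrac32$. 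The Pauli-Darwin case ($-\Delta A=\mathbb{P}J$, $\nabla\cdot A=0$) and the Pauli-Poisswell case ($-\Delta A=J$, $\nabla\cdot A=-\partial_t V$) are treated identically, the bounds on $\nabla\cdot A$ being subsumed in $\|\nabla A\|_{H^s}\lesssim F(\|u\|_{H^s})$.

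For the regularized equations one substitutes $\partial_t u=-iHu-\epsilon Hu+\epsilon\,\|u_0\|_{L^2}^{-2}(u,Hu)u$. The term $-iHu$ reproduces the computation above. The term $-\epsilon Hu$ contributes
\[
-2\epsilon\big(\langle D\rangle^s u,H\langle D\rangle^s u\big)_{L^2}-2\epsilon\,\Re\big(\langle D\rangle^s u,[\langle D\rangle^s,H]u\big)_{L^2};
\]
the first piece is $\le0$, because $(w,Hw)_{L^2}=\tfrac12\|(\sigma\cdot\nabla_A)w\|_{L^2}^2+\int V|w|^2\,\dd x\ge0$ (indeed $V=(-\Delta)^{-1}|u|^2\ge0$ by the maximum principle), so it may simply be discarded, while the second is $\le\epsilon F(\|u\|_{H^s})\le F(\|u\|_{H^s})$ for $\epsilon\le1$ by the commutator bound already established. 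The last term contributes $2\epsilon\,\|u_0\|_{L^2}^{-2}(u,Hu)\,\|u\|_{H^s}^2$, which by the first estimate of Lemma~\ref{thm:estimates energy} is $\lesssim \|u_0\|_{L^2}^{-2}F(\|u\|_{H^1})\,\|u\|_{H^s}^2\lesssim F(\|u\|_{H^s})$, the implicit constant now also depending on the fixed number $\|u_0\|_{L^2}$ (which is all that is needed for the compactness argument of Section~\ref{sec:weak sol}). All of these bounds are independent of $\epsilon\in(0,1]$, which gives the last assertion. The main obstacle throughout is the derivative loss in $A\cdot\nabla u$; it is overcome by the symmetry of $H$ (which cancels the worst contribution) combined with the elliptic gain $\|\nabla A\|_{H^s}\lesssim F(\|u\|_{H^s})$ (which lets the surplus derivative fall on $A$). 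The remaining work is routine but calls for some care with Sobolev exponents when $s$ is close to $\tfrac32$, particularly for the $\nabla\cdot A$ coefficient in the Lorenz gauge.
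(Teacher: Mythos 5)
Your proof is correct and takes essentially the same route as the paper's: an $H^s$ energy estimate in which the zeroth-order coefficients $V$, $\nabla\cdot A$, $|A|^2$, $\sigma\cdot B$ are controlled by product estimates together with Lemmas \ref{lemma V} and \ref{lem:apriori_magnetic}, while the derivative-losing term $A\cdot\nabla u$ is handled by a Kato--Ponce-type commutator estimate combined with the elliptic gain $\|\nabla A\|_{H^s}\lesssim F(\|u\|_{H^s})$; your reduction via the symmetry of $H$ to $2\Im\big(\langle D\rangle^s u,[\langle D\rangle^s,H]u\big)$ is just a cleaner packaging of the paper's term-by-term real-part computation, in which the diagonal part of $A\cdot\nabla D^s u$ is instead removed by integration by parts at the cost of $\|\nabla A\|_{L^\infty}$. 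Your explicit treatment of the regularized equations --- in particular the observation that $V=(-\Delta)^{-1}|u|^2\ge 0$ makes $\big(\langle D\rangle^s u,H\langle D\rangle^s u\big)\ge 0$, so the $-\epsilon H$ contribution may simply be discarded, uniformly in $\epsilon\in(0,1]$ --- correctly supplies what the paper leaves as a ``straightforward adaptation.''
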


\begin{proof} {We only treat the case of solutions of the equations without regularization, the case of the regularized equations being a straightforward adaptation.}
Apply $D^s$ to \eqref{eq:PD_Pauli}, resp. \eqref{eq:PPw_Pauli}, then take the $\mathbb{C}^2$ inner product with $D^s {u}$ and integrate over $\mathbb{R}^3$:
\begin{align*}
 &   \int \langle D^s {u} , \partial_t D^s u\rangle \dd x - i \int \langle D^s {u} , \Delta D^s u\rangle \dd x \\
    & \; = \int \langle D^s {u} , D^s (A\cdot \nabla u)\rangle \dd x + \frac{1}{2}\int \langle D^s u , D^s ((\text{div} A) u )\rangle \dd x - \frac{i}{2} \int \langle D^s {u} , D^{s} (|A|^2 u)\rangle \dd x \\
    & \quad \quad \qquad \qquad  - i \int \langle D^s {u} , D^s (Vu)\rangle \dd x + \frac{i}{2} \int D^s \langle{u} , D^s ((\sigma \cdot B)u)\rangle \dd x
\end{align*}
Taking the real part and using $2\Re \langle{f}, \partial_t f\rangle = \partial_t |f|^2$ yields
\begin{equation*}
\begin{split}
    \frac{1}{2}\frac{d}{dt} \int |D^s u|^2 \dd x &= \Re \int \langle D^s {u} , D^s (
    A\cdot \nabla u)\rangle \dd x + \frac{1}{2}\Re\int \langle D^s u , D^s ((\text{div} A) u )\rangle \dd x \\ & \qquad - \Re\frac{i}{2} \int\langle D^s {u} , \nabla^{s} (|A|^2 u)\rangle \dd x - \Re i \int \langle D^s {u} , D^s (Vu)\rangle \dd x \\ & \qquad + \Re\frac{i}{2} \int \langle D^s {u} , D^s ((\sigma \cdot B)u)\rangle \dd x \\
& = I + II + III + IV + V.
\end{split}
\end{equation*}

The terms $II$, $III$, $IV$, $V$ can be estimated as follows: on the one hand, by Lemma \ref{lemma V},
\begin{equation*}
\left\| Vu  \right\|_{H^s} \lesssim \|u\|_{H^1}^2 \|u\|^2_{H^s}.
\end{equation*}
On the other hand, by the fractional Leibniz rule, Lemma \ref{lem:apriori_magnetic} and the Sobolev embedding theorem,
\begin{align*}
  &  \|D^s(|A|^2u)\|_{L^2} \lesssim \|A\|_{L^\infty}(\|A\|_{\dot{H}^s}\|u\|_{L^\infty}+ \|A\|_{L^\infty}\|u\|_{\dot{H}^s}) \lesssim F(\|u\|_{H^s})\\
&  \|D^s((\nabla \cdot A)u)\|_{L^2} + \|D^s((\sigma \cdot B)u)\|_{L^2} \lesssim \|\nabla A\|_{\dot{H}^s} \|u\|_{L^\infty} + \|\nabla A\|_{L^\infty} \|u\|_{\dot{H}^s}
 \lesssim F(\|u\|_{H^s}).
\end{align*}
Using in addition the Cauchy-Schwartz inequality gives
$$
| II|  + | III| + |IV| + | V | \lesssim F(\|u\|_{H^s}).
$$

This leaves us with the term $I$, which we write as
$$
I     = \Re \int \langle D^s {u} , (
    A\cdot \nabla D^s u)\rangle \dd x +  \Re \int \langle D^s {u} , D^s (
    A\cdot \nabla u) - A\cdot \nabla D^s u\rangle \dd x = I' + I''.
$$
To deal with $I'$, we integrate by parts to obtain, with the help of Lemma \ref{lem:apriori_magnetic},
$$
| I' | = - \Re \int \operatorname{div} A  \,| D^s u|^2 \dd x \lesssim \| \nabla A \|_{L^\infty} \| u \|_{H^s}^2 \lesssim F(\| u \|_{H^s}).
$$
To estimate $I''$, we rely on the following commutator estimate from \cite{FMRR}
$$
\left\| D^s \left[ f \cdot \nabla g \right] - f \cdot \nabla D^s g \right\|_{L^2} \lesssim \| \nabla f \|_{H^s} \| g \|_{H^s} \qquad \mbox{if $s > \frac 32$}.
$$
With the help of Lemma \ref{lem:apriori_magnetic}, it implies that
$$
| I''| \lesssim \| u \|_{H^s}^2 \| \nabla A \|_{H^s} \lesssim F(\| u \|_{H^s}).
$$
\end{proof}

\subsection{Proof of Theorem \ref{thm:local wellposedness}}
\label{proof of lwp}

{Proposition \ref{proposition lwp regularized} provides a local strong solution of the regularized equation on an interval depending on $\epsilon$. The a priori estimate \eqref{eq:grönwall2} allows the extension of the solution from the $\epsilon$-dependent interval $(0,T_{\epsilon})$ to an interval denoted by $(0,T)$, whose length depends only on the initial data $u_0$. 

\medskip

We will now show that the limit $\epsilon \rightarrow 0$ furnishes a strong solution. To this end, consider two regularized solutions $u^{\epsilon}$, $u^{\epsilon'}$  of \eqref{eq:new regularization Poisswell}-\eqref{eq:new regularization Poisswell data} with identical initial data $u_0$. We will show that the sequence is Cauchy in $C([0,T],L^2(\mathbb{R}^3))$. Then it follows from interpolation with $L^{\infty}([0,T],H^s(\mathbb{R}^3))$ that $u^{\epsilon}$ is Cauchy in $C([0,T],H^r(\mathbb{R}^3))$ for $r<s$. Using the equation, one has $\partial_t u^{\epsilon} \in C([0,T],H^{r-2}(\mathbb{R}^3))$ and by taking pointwise limits one has that $u\in C([0,T],H^r(\mathbb{R}^3)) \cap C^1([0,T],H^{r-2}(\mathbb{R}^3))$ solves the equation. From Fatou's lemma, the boundedness of $u^{\epsilon}$ in $H^s$ and the convergence to $u$ in $L^2$ it follows that $u\in L^{\infty}([0,T],H^s(\mathbb{R}^3))$. 

To see that $u^{\epsilon}$ is Cauchy in $C([0,T],L^2(\mathbb{R}^3))$, consider the difference $v= u^{\epsilon}-u^{\epsilon'}$ which solves
\begin{align*}
    \partial_t v &= \frac{1}{2}(\epsilon - \epsilon')\Delta u^{\epsilon} + \frac{\epsilon'}{2}\Delta v + \frac{i}{2}\Delta v + K_{\epsilon}(u^{\epsilon})-K_{\epsilon'}(u^{\epsilon'}) \\ &\qquad \qquad + \epsilon \frac{(u^{\epsilon},H^{\epsilon}u^{\epsilon})}{\|u_0\|_{L^2}^2}u^{\epsilon}-\epsilon' \frac{(u^{\epsilon'},H^{\epsilon'}u^{\epsilon'})}{\|u_0\|_{L^2}^2}u^{\epsilon'}.
\end{align*}
Taking the $\mathbb{C}^2$ inner product with $\overline{v}$ and integrating yields}
\begin{align*}
    \partial_t ||v||_2^2 &\leq |(\epsilon - \epsilon')|\left|\int (\Delta u^{\epsilon} )\overline{v}\dd x\right| - \epsilon'\|\nabla v\|_2^2 \\ &\qquad + \int \langle{v},K_{\epsilon}(u^{\epsilon})-K_{\epsilon'}(u^{\epsilon'})\rangle  + \int \langle{v},\epsilon \frac{(u^{\epsilon},H^{\epsilon}u^{\epsilon})}{\|u_0\|_{L^2}^2}u^{\epsilon}-\epsilon' \frac{(u^{\epsilon'},H^{\epsilon'}u^{\epsilon'})}{\|u_0\|_{L^2}^2}u^{\epsilon'}\rangle
\end{align*}
For the first term, note that since $H^{3/2+} \subset H^{-3/2-}$ and by the a priori bound for $u^{\epsilon}$ in $H^{3/2+}$,
\begin{equation*}
    |(\epsilon - \epsilon')|\left|\int (\Delta u^{\epsilon} )\overline{v}\dd x\right| \leq |(\epsilon - \epsilon')\|u^{\epsilon}\|_{H^{3/2+}}\|v\|_{H^{-1/2-}}
\end{equation*}
The second term is negative and the remaining terms can be easily estimated using Lemmas \ref{lemma V}, \ref{lem:apriori_magnetic} and \ref{thm:estimates energy}. 

By Grönwall we obtain
\begin{equation}
    \sup_{t\in [0,T]} \|v\|_{L^2} \leq |\epsilon - \epsilon'|
\end{equation}
This implies that $u^{\epsilon}$ is a Cauchy sequence in $C([0,T],L^2)$.

Uniqueness of solutions follows from a similar argument using the difference of two solutions $u-w$ with identical initial data  and $\epsilon = \epsilon'$.

The continuous dependence on the data follows by a standard argument due to J. Bona and R. Smith, cf. \cite{bona1975initial, erdougan2016dispersive}, which we sketch for the sake of completeness. 

Define the approximate identity $\varphi^{\delta}$, $\delta > 0$ where $\varphi \in \mathcal{S}$ such that $\varphi(0) = 1$, $\partial_{\xi}^k \widehat{\varphi}(0) = 0$ for all $k>0$ and $\varphi^{\delta}(x) = \delta^{-1} \varphi(x/\delta)$. Then define the mollified initial data $u_0^{\delta} = \varphi^{\delta} \ast u_0$. Choose a sequence $u_{n}$ of solutions with initial data $u_{n,0}$ such that $u_{n,0}$ converges to $u_0$ in $H^s$. The goal is to show that $u_n$ converges to a solution $u$ with initial data $u_0$ uniformly in time for $t\leq T$. We have
\begin{equation}
    \|u-u_n\|_{H^s} \leq \|u-u^{\delta} \|_{H^s} + \|u^{\delta} - u^{\delta}_n\|_{H^s} + \|u^{\delta}_n - u_n \|_{H^s}.
\end{equation}
Here, $u^{\delta}_n$ is the solution with data $u^{\delta}_{n,0}=\varphi^{\delta} \ast u_{n,0}$. 

The first term can be treated as follows:
Observe that $\|u^{\delta}\|_{H^{s+l}} \leq \delta^{-l}$, which follows from the a priori estimate and the definition of $u^{\delta}_0$. Therefore $u^{\delta}$ is a $C([0,T],H^s)$-solution by interpolation. Furthermore, for $\delta'<\delta$, we have\begin{equation}
\|u^{\delta}-u^{\delta'}\|_{L^2} = o(\delta^{s}).
\label{eq:difference delta delta'}
\end{equation} This follows from
\begin{equation*}
    \|u^{\delta}-u^{\delta'}\|_{L^2} \lesssim  \|u^{\delta}_0 - u^{\delta'}_0\|_{L^2},
    \label{eq:a priori differences}
\end{equation*}
and the definitions of $u_0^{\delta},u_0^{\delta'}$. By interpolation with the bound on $u^{\delta}$ in $H^{s+l}$ we obtain that $u^{\delta}$ is a convergent sequence in $L^{\infty}([0,T],H^s)$ converging to $u\in C([0,T],H^s)$.

For the second term we have
\begin{equation}
    \|u^{\delta}-u^{\delta}_n\|_{H^s} \lesssim e^{{T}{\delta^{-l}}} \|u^{\delta}_0-u^{\delta}_{n,0}\|_{H^s}
\end{equation}
which can be proved by calculating $\partial_t\|D^s(u^{\delta}-u^{\delta}_n)\|_{L^2}^2$ and using $\|u^{\delta}\|_{H^{s+l}}\leq \delta^{-l}$ together with Grönwall's inequality.

For the third term, we need to show 
\begin{equation}
    \sup_n \|u^{\delta}_n-u_n\|_{H^s} \rightarrow 0
\end{equation}
as $\delta \rightarrow 0$. To this end consider $\delta'<\delta$. Since $\|u^{\delta}_n-u_n\|_{H^s} = \lim_{\delta'\rightarrow 0}\|u^{\delta}_n-u^{\delta'}_n\|_{H^s}$ we only need to show that
\begin{equation}
\sup_{\delta'<\delta,n} \|u^{\delta}_n-u_n^{\delta'}\|_{L^2} = o(\delta^s),
\label{eq:estimate third term}
\end{equation}
Then interpolation with $\|u^{\delta}\|_{H^{s+l}}\leq \delta^{-l}$, yields the claim. In order to show \eqref{eq:estimate third term}, we can again resort to showing it for the initial data (by the a priori estimate) similarly to the first term.

\section{Global finite energy weak solutions}
\label{sec:weak sol}

In this section we will prove the global existence of weak solutions in the energy space $H^1(\mathbb{R}^3)$.
First we extend the local solutions of Proposition \ref{thm:local wellposedness} globally. Then we take the limit $\epsilon \rightarrow 0$, which will be justified by uniform bounds for the regularized solution.

\subsection{Global solutions of the regularized equations}
\label{sec:regularized_wellposedness}

First we extend the local solution of Proposition \ref{proposition lwp regularized} to a global one:
\begin{proposition}
    \label{proposition gwp regularized H^1}
Let $\epsilon>0$, $s = 1$. Then the solutions $u^\star_\epsilon \in C((0,T_{\epsilon}),H^1(\mathbb{R}^3))$, $\star =\text{D},\text{PW}$ constructed in Proposition \ref{proposition lwp regularized} can be extended globally to $C(\mathbb{R},H^1(\mathbb{R}^3))$. Moreover, the solutions are bounded in $H^1(\mathbb{R}^3)$ uniformly in $\epsilon$ and $t$.
\end{proposition}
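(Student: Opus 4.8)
The plan is to promote the local solutions of Proposition \ref{proposition lwp regularized} to global ones by combining the continuation mechanism built into that proposition with the conservation of charge and the dissipation of energy from Lemma \ref{thm:decay lemma}, which together furnish an a priori bound on $\|u^\star_\epsilon(t)\|_{H^1}$ uniform in $t$ and in $\epsilon$. I treat Pauli-Darwin and Pauli-Poisswell simultaneously, since the energy and the elliptic bounds of Lemmas \ref{lemma V} and \ref{lem:apriori_magnetic} are identical in the two cases, the Leray projection being bounded on all the relevant spaces.

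First I would record that, by the smoothing estimate \eqref{eq:general estimate heat Schrödinger} and Duhamel's formula \eqref{eq:integral_equation}, the solution $u^\star_\epsilon(t)$ lies in $H^\sigma$ for every $\sigma$ as soon as $t>0$, so that the formal computations of Lemma \ref{th:energy_lemma} and Lemma \ref{thm:decay lemma} are legitimate for it. Hence $\|u^\star_\epsilon(t)\|_{L^2}=\|u_0\|_{L^2}$ and $E(t)\le E(0)$ throughout the interval of existence, where by Lemmas \ref{lemma V} and \ref{lem:apriori_magnetic} the initial energy satisfies $E(0)\lesssim \|u_0\|_{H^1}^2+\|u_0\|_{H^1}^4<\infty$ and does not depend on $\epsilon$.

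The substantial step, and the one I expect to be the main obstacle, is converting this into a bound on $\|\nabla u\|_{L^2}$: the energy controls the covariant gradient $(\sigma\cdot\nabla_A)u$ and $\nabla A$ in $L^2$, but not $\nabla u$ itself, and $A$ carries no a priori $L^2$ bound. I would write $\nabla u=\nabla_A u+iAu$, use $(\sigma\cdot\nabla_A)^2=\Delta_A+\sigma\cdot B$ together with the symmetry of $(\sigma\cdot\nabla_A)^2$ to get $\|\nabla_A u\|_{L^2}^2=\|(\sigma\cdot\nabla_A)u\|_{L^2}^2+\int\langle u,(\sigma\cdot B)u\rangle\,\dd x$, and then bound the Stern-Gerlach integral by $\|B\|_{L^2}\|u\|_{L^4}^2\lesssim\|\nabla A\|_{L^2}\|u\|_{L^4}^2$ and the term $\|Au\|_{L^2}$ by $\|A\|_{L^6}\|u\|_{L^3}\lesssim\|\nabla A\|_{L^2}\|u\|_{L^2}^{1/2}\|\nabla u\|_{L^2}^{1/2}$. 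Feeding in $\|\nabla A\|_{L^2}^2\le E(0)$, $\|u\|_{L^2}=\|u_0\|_{L^2}$ and the Gagliardo-Nirenberg inequalities $\|u\|_{L^4}^2\lesssim\|u\|_{L^2}^{1/2}\|\nabla u\|_{L^2}^{3/2}$, $\|u\|_{L^3}\lesssim\|u\|_{L^2}^{1/2}\|\nabla u\|_{L^2}^{1/2}$ leads to an inequality of the shape $\|\nabla u\|_{L^2}\le E(0)^{1/2}+C\,\|\nabla u\|_{L^2}^{3/4}+C\,\|\nabla u\|_{L^2}^{1/2}$ with $C=C(E(0),\|u_0\|_{L^2})$. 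Since the exponents $3/4$ and $1/2$ are subcritical, this closes to $\|\nabla u(t)\|_{L^2}\le\Psi(\|u_0\|_{H^1})$, hence $\|u^\star_\epsilon(t)\|_{H^1}^2=\|u_0\|_{L^2}^2+\|\nabla u(t)\|_{L^2}^2\le M(\|u_0\|_{H^1})$ on the whole interval of existence, with $M$ independent of $t$ and $\epsilon$.

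Finally I would globalize. Proposition \ref{proposition lwp regularized} produces, from any datum $v\in H^1$, a solution on a time interval whose length is bounded below by a positive constant depending only on $\|v\|_{L^2}$, $\|v\|_{H^1}$ and $\epsilon$; applying this with $v=u^\star_\epsilon(t_0)$, which has $\|v\|_{L^2}=\|u_0\|_{L^2}$ and $\|v\|_{H^1}^2\le M$, and for which, by charge conservation, the regularized equation coincides with the original one, extends the solution forward by a fixed increment $\tau_0=\tau_0(\|u_0\|_{L^2},M,\epsilon)>0$; iterating reaches $[0,\infty)$. The interval $t\le 0$ is handled in the same way after replacing $\epsilon$ by $-\epsilon$ in \eqref{eq:new regularization}, for which the proof of Lemma \ref{thm:decay lemma} gives $\frac{\dd}{\dd t}E(t)\ge 0$, i.e. $E(t)\le E(0)$ for $t\le 0$, with charge still conserved, and the limit $\epsilon\to0^+$ is unaffected. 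Passing to pointwise-in-time limits along the iteration yields $u^\star_\epsilon\in C(\mathbb{R},H^1)$, and the bound of the previous paragraph is precisely the asserted uniform $H^1$ bound.
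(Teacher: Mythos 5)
Your proposal is correct and follows the same overall strategy as the paper: blow-up alternative plus continuation, with the uniform-in-$(\epsilon,t)$ bound on $\|u\|_{H^1}$ extracted from charge conservation and the energy dissipation of Lemma \ref{thm:decay lemma}, and closed by absorbing a subcritical power of $\|\nabla u\|_{L^2}$. The one genuine difference is the decomposition used to recover $\nabla u$ from the energy. The paper exploits $(\sigma\cdot\nabla)^2=\Delta$, i.e. $\|\nabla u\|_{L^2}=\|(\sigma\cdot\nabla)u\|_{L^2}$, and splits $\sigma\cdot\nabla u=(\sigma\cdot\nabla_A)u+i(\sigma\cdot A)u$, so the Stern--Gerlach term never enters and the only remainder is $\|(\sigma\cdot A)u\|_{L^2}\lesssim\|\nabla A\|_{L^2}\|u\|_{L^2}^{1/2}\|\nabla u\|_{L^2}^{1/2}$, absorbed by Young's inequality. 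You instead split $\nabla u=\nabla_A u+iAu$ and must pay for $\|\nabla_A u\|_{L^2}^2-\|(\sigma\cdot\nabla_A)u\|_{L^2}^2=\int\langle u,(\sigma\cdot B)u\rangle\,dx$, which you correctly bound by $\|\nabla A\|_{L^2}\|u\|_{L^2}^{1/2}\|\nabla u\|_{L^2}^{3/2}$; the resulting exponent $3/4$ is still subcritical, so your inequality closes equally well at the cost of one extra Gagliardo--Nirenberg interpolation. Both routes give the same $\epsilon$- and $t$-independent bound. Two further remarks: your explicit treatment of $t<0$ (flipping the sign of $\epsilon$ so the regularized flow remains forward-parabolic) addresses a point the paper's proof passes over in silence even though the statement asks for $C(\mathbb{R},H^1)$; and your appeal to parabolic smoothing to legitimize the formal energy identities is a reasonable patch for a step the paper also leaves implicit (Lemma \ref{thm:decay lemma} is stated only for smooth, rapidly decaying solutions).
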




\begin{proof}

Owing to the blow-up alternative we deduce that $T_{\epsilon}=\infty$ if the $H^1$-norm of $u$ does not blow up in finite time. By Lemma \ref{thm:estimates energy} the energy is bounded a priori at time $t=0$, i.e. for some function $F$,

\begin{equation}
    \label{eq:energy_at_time_zero}
    E(0) < F(\|u_0\|_{H^1}).
\end{equation}

Note that since $(\sigma \cdot \nabla)^2 = \Delta$, we have $\| \nabla u \|_{L^2} = \| \sigma \cdot \nabla u \|_{L^2}$. By Lemma \ref{thm:decay lemma}, the charge and the energy are decaying; using furthermore H\"older's inequality and the Sobolev embedding theorem,
\begin{align*}
\|\nabla u\|_{L^2} &\leq \|(\sigma \cdot \nabla_A )u\|_{L^2} + \|(\sigma \cdot A)u\|_{L^2}  \\
        &\leq E(0)^{1/2} +\|A\|_{L^6} \|u\|_3 \\
        &\leq E(0)^{1/2} + C \|\nabla A\|_{L^2} \|u\|_{L^2}^{1/2} \|u\|_{L^6}^{1/2} \\
        &\leq E(0)^{1/2} + C E(0)^{1/2}Q(0)^{1/4} \|\nabla u \|_{L^2}^{1/2},
\end{align*}
{By the Cauchy-Schwarz inequality, $C E(0)^{1/2}Q(0)^{1/4} \|\nabla u \|_{L^2}^{1/2} \leq \frac{1}{2} \|\nabla u \|_{L^2} +  \frac{C^2}{2} E(0)Q(0)^{1/2}$, which entails
\begin{equation}
    \|\nabla u \|_{L^2} \lesssim E(0)^{1/2} + E(0)Q(0)^{1/2}.
\end{equation}}
 Together with  and \eqref{eq:energy_at_time_zero} one obtains the a priori bound $\|u\|_{H^1} \leq C$. Now iterate the solution by a standard continuation argument.
 \end{proof}

\subsection{Global weak solutions of the original equation}
\label{sec:global_weak_solutions}
\begin{proof}[Proof of Theorem \ref{th:global weak solutions}]
To simplify notations, we will focus on the case of the Pauli-Poisswell equation, since the Pauli-Darwin equation can be treated almost identically.

\medskip

\noindent \underline{Meaning of the PDE.}
Let us first specify the sense in which this PDE will be solved. It can be written
\begin{align}
i\partial_t u &= -\frac{1}{2} \Delta u + iA\cdot \nabla u + \frac{i}{2} (\text{div} A) u + \frac 12 |A|^2 u + V u -\frac{1}{2}  (\mathbf{\sigma} \cdot B) u, \\
-\Delta V &= |u|^2,  \\
-\Delta A&= \Im \langle u, \nabla u\rangle - A |u|^2 + \frac{1}{2}\nabla \times \langle u, \mathbf{\sigma} u \rangle , \qquad B = \nabla \times A, \\
u(0,x) &= u_0,
\end{align}
We will require $(u,A,V)$ to be of finite energy, namely
$$
(u,A,V)  \in L^\infty_t H^1_x \times L^\infty_t \dot H^1_x \times L^\infty_t \dot{H}^1_x, \qquad (t,x) \in \mathbb{R}_+ \times \mathbb{R}^3.
$$
This allows to give a meaning to all the nonlinear terms appearing above as products between Lebesgue functions. Then the equalities above are required to hold in the sense of distributions.

\bigskip

\noindent \underline{Uniform bounds on approximate solutions} Consider a sequence $\epsilon_n$ such that $\epsilon_n \rightarrow 0$ as $n\rightarrow \infty$. By Proposition \ref{proposition gwp regularized H^1}, there exists a unique solution $u_n$ for each $n$ to the regularized Pauli-Poisswell equation

\begin{align*}
i\partial_t u_n &= (1 -i \epsilon_n ) \left[ -\frac{1}{2} \Delta u_n + iA\cdot \nabla u_n + \frac{i}{2} (\text{div} A_n) u_n + \frac 12 |A_n|^2 u_n + V_n u_n  -\frac{1}{2}  (\mathbf{\sigma} \cdot B_n)\right]u_n,\\
&\qquad -i\epsilon_n \frac{(u_n,H_n,u_n)}{\|u_0\|_{L^2}^2}u_n \\
-\Delta V_n &= |u_n|^2,  \\
-\Delta A_n&= \Im \langle u_n, \nabla u_n \rangle - A_n |u_n|^2 + \frac{1}{2}\nabla \times \langle u_n, \mathbf{\sigma} u_n \rangle , \qquad B_n = \nabla \times A_n, \\
u_n(0,x) &= u_0  ,
\end{align*}
where 
\begin{equation}
    H_n = -\frac{1}{2}(\sigma \cdot (\nabla-iA_n))^2 + V_n u_n
\end{equation}
is the Pauli Hamiltonian.

Let us now record the uniform bounds which hold for the sequence $(u_n,A_n,V_n)$.
\begin{itemize}
\item By Proposition \ref{proposition gwp regularized H^1}, the sequence $(u_n,A_n,V_n)$ is uniformly bounded in $(L^\infty_t H^1_x \times L^\infty_t \dot H^1_x \times L^\infty_t \dot{H}^1_x)(\mathbb{R}_+ \times \mathbb{R}^3)$.
\item By examining the right-hand side of the evolution equation for $(u_n)$, and using classical arguments (H\"older's inequality and Sobolev embedding theorem), it appears that $\partial_t u_n$ is uniformly bounded in $L^\infty_t H^{-1}(\mathbb{R}_+ \times B_{\mathbb{R}^3}(0,R))$ for any $R>0$.
\end{itemize}

\bigskip

\noindent \underline{Compactness and extraction of the limit} By the Banach-Alaoglu theorem, there exists $(u,A,V) \in L^\infty_t H^1_x \times L^\infty_t \dot H^1_x \times L^\infty_t \dot{H}^1_x$ such that
$$
(u_n,A_n,V_n) \overset{w-*}{\longrightarrow} (u,A,V) \qquad \mbox{in $L^\infty_t H^1_x \times L^\infty_t \dot H^1_x \times L^\infty_t \dot{H}^1_x$ as $n\to \infty$},
$$
up to selecting a subsequence (which we still denote by the subscript $n$ for simplicity).

Furthermore, thanks to the uniform bounds on $(u_n)$ and $(\partial_t u_n)$ recorded above, we can apply the Aubin-Lions lemma to this convergent subsequence. It gives a new subsequence such that
$$
u_n \longrightarrow u \qquad \mbox{in $L^p_{t,x} ([0,R] \times B_{\mathbb{R}^3}(0,R))$, $p<6$, as $n\to \infty$, for any $R>0$}
$$
(to be more precise: the Aubin-Lions lemma gives convergence for a fixed $R>0$ and the convergence for any $R>0$ follows from a diagonal argument).

Combining the above with Lemma \ref{th:strong_convergence_A} gives that
$$
A_n \longrightarrow A \qquad \mbox{in $L^6_{t,x} ([0,R] \times B_{\mathbb{R}^3}(0,R))$, as $n\to \infty$, for any $R>0$}
$$
The same holds for $V$ with a much simpler proof (see for instance \cite{guo1995global}), which we omit:
$$
V_n \longrightarrow V \qquad \mbox{in $L^6_{t,x} ([0,R] \times B_{\mathbb{R}^3}(0,R))$, as $n\to \infty$, for any $R>0$}
$$

\medskip

\noindent \underline{Passing to the limit in the equation} The uniform bounds which were given above immediately imply that
\begin{equation*}
\epsilon_n \left[ -\frac{1}{2} \Delta u_n + iA\cdot \nabla u_n + \frac{i}{2} (\text{div} A_n) u_n + \frac 12 |A_n|^2 u_n + V_n u_n +\frac{(u_n,H_n,u_n)}{\|u_0\|_{L^2}^2}u_n\right] \overset{n \to \infty}{\longrightarrow} 0 \qquad \mbox{in $\mathcal{S}'$}.
\end{equation*}
There remains to take the limit of the nonlinear terms. Those which do not involve derivatives are easily dealt with by the strong convergence properties which were mentioned. Those which do involve derivatives can be written as the product of a strongly converging term and a weakly converging term, which allows to pass to the limit.
\end{proof}

\section*{Acknowledgement}
N.M. and J.M. acknowledge financial support from the Austrian Science Fund (FWF) via the SFB project F65 and the DK project W1245, as well as  the Vienna Science and Technology Fund (WWTF) via project MA16-066 "SEQUEX".

J.M. acknowledges financial support by the Austrian Science Fund (FWF) 10.55776/J4840.

P.G. was supported by the Royal Society through a Wolfson fellowship and is grateful to the Wolfgang Pauli Institute for continuous hospitality.

\bibliography{wellposedness}
\bibliographystyle{abbrv}

\end{document}